\theoremstyle{plain}
 \newcommand{\rr}{\mathbb R}
\theoremstyle{plain}
\newtheorem{thm}{Theorem}
\newtheorem{prop}{Proposition}
\newtheorem{coro}{Corollary}
\theoremstyle{definition}
\newtheorem{exam}{Example}
\newtheorem{rem}{Remark}
\begin{document}
\setcounter{page}{1}

\title{ Dunford-Pettis and Compact Operators Based on Unbounded Absolute Weak Convergence}
\author[Nazife Erkur\c{s}un-\"Ozcan, Niyazi An{\i}l Gezer, Omid Zabeti ]{Nazife Erkur\c{s}un-\"Ozcan$^{(1)}$, Niyazi An{\i}l Gezer$^{(2)}$, Omid Zabeti$^{(3, *)}$}

\address{$^{1}$ Department of Mathematics, Faculty of Science, Hacettepe University, Ankara, 06800,Turkey.}
\email{{erkursun.ozcan@hacettepe.edu.tr}}

\address{$^{2}$ Department of Mathematics, Middle East Technical University, Ankara, 06800, Turkey.}
\email{{ngezer@metu.edu.tr }}

\address{$^{3}$ Department of Mathematics, Faculty of Mathematics, University of Sistan and Baluchestan, P.O. Box: 98135-674, Zahedan, Iran.}
\email{{o.zabeti@gmail.com}}

\subjclass[2010]{Primary 46B42, 54A20; Secondary  46B40.}

\keywords{$uaw$-convergence, $un$-convergence, $uaw$-compact operator, $uaw$-Dunford-Pettis operator}

\date{Received: xxxxxx; Revised: yyyyyy; Accepted: zzzzzz.
\newline \indent $^{*}$ Corresponding author}

\begin{abstract}
In this paper, using the concept of unbounded absolute weak convergence ($uaw$-convergence, for short) in a Banach lattice, we define two classes of continuous operators, named $uaw$-Dunford-Pettis and $uaw$-compact operators. We investigate some properties and relations between them. In particular, we consider some hypotheses on domain or range spaces of operators such that the adjoint or the modulus of a $uaw$-Dunford-Pettis or $uaw$-compact operator inherits a similar property. In addition, we look into some connections between compact operators, weakly compact operators, and Dunford-Pettis ones with $uaw$-versions of these operators. Moreover, we examine some relations between $uaw$-Dunford-Pettis operators, $M$-weakly compact operators, $L$-weakly compact operators, and $o$-weakly compact ones. As a significant outcome, we show that the square of any positive $uaw$-Dunford-Pettis ($M$-weakly compact) operator on an order continuous Banach lattice is compact. Many examples are given to illustrate the essential conditions, as well.
\end{abstract} \maketitle





\section{Introduction and Preliminaries}

The notion of $uo$-convergence under the name individual convergence was initially introduced in \cite{N} and "$uo$-convergence" was proposed firstly in \cite{DM}. Recently, various types of interesting papers about $uo$-convergence in Banach lattices have been announced by several authors (see \cite{G, GTX, GX} for more expositions on these results). $Un$-convergence was introduced by Troitsky in \cite{T} and further investigated in \cite{DOT, KMT}. Unbounded convergent nets in term of weak convergence, $uaw$-convergence, was introduced by Zabeti and considered in \cite{Z}.

Let $E$ be a Banach lattice. For a net $(x_\alpha)$ in $E$, if there is a net $(u_\gamma)$, possibly over a
different index set, with $u_\gamma \downarrow 0$ and for every $\gamma$ there exists $\alpha_0$ such
that $|x_{\alpha} - x| \leq u_\gamma$ whenever $\alpha \geq \alpha_0$, we say that $(x_\alpha)$ converges to $x$ in order, in notation, $x_\alpha \xrightarrow{o}x$.  A net $(x_{\alpha})$ in $E$ is said to be unbounded order convergent ( $uo$-convergent, in brief) to $x \in E$ if for each $u \in E_{+}$, the net $(|x_{\alpha} - x| \wedge u)$ converges to zero in order.  It is called unbounded norm convergent ($un$-convergent, for short) if $\||x_{\alpha}-x|\wedge u\|\rightarrow 0.$  For a version of an unbounded convergent net in term of weak convergence, a net $(x_{\alpha})$ in a Banach lattice $E$ is said to be unbounded absolutely weakly convergent to $x\in E$ if for each positive $u\in E$, one has $|x_{\alpha}-x|\wedge u\xrightarrow{w}0.$  In a recent paper \cite{Z}, several properties of $uaw$-convergence have been investigated. In particular, order continuous Banach lattices and reflexive ones are characterized in terms of $uaw$-convergent nets. In addition, it is shown that the $uaw$-convergence is topological.

In this note, by an operator, we mean a bounded operator between Banach lattices, unless otherwise explicitly stated.

It is known that compact operators have important applications both in analysis and other disciplines. In this paper, the concept of a $uaw$-compact operator is defined. An operator $T\colon X\rightarrow E$, where $X$ is a Banach space and $E$ is a Banach lattice, is said to be (sequentially) $uaw$-compact if $T(B_X)$ is relatively (sequentially) $uaw$-compact where $B_X$ denotes the closed unit ball of the Banach space $X$. Equivalently, for every bounded net $(x_\alpha)$  (respectively, every bounded sequence $(x_n)$) its image has a subnet (respectively, subsequence), which is $uaw$-convergent. We further say that the operator $T$ is $un$-compact if $T(B_X)$ is relatively $un$-compact in $E$. In \cite{KMT}, some properties of $un$-compact operators are studied.

Moreover, we consider a $uaw$-version of Dunford-Pettis operators. For the general theory of Dunford-Pettis operators, reader is referred to \cite{AB, M, S}. Suppose $E$ is again a Banach lattice and $X$ is a Banach space. We say that an operator $T\colon E\rightarrow X$  is $uaw$-Dunford-Pettis  if for every norm bounded sequence $(x_n)$ in $E$, $x_n\xrightarrow{uaw}0$ implies $||T(x_n)||\rightarrow 0$.

In the present paper, we investigate relationships between compact and Dunford-Pettis operators in the $uaw$-version. Some properties of $uaw$-compact and $uaw$-Dunford-Pettis operators are studied. Moreover, we utilize some conditions on domain or range of operators to ensure us when the adjoint or the modulus of a $uaw$-compact or $uaw$-Dunford-Pettis operator has the same property. Furthermore, we employ some assumptions to establish some connections between $uaw$-Dunford-Pettis operators, $M$-weakly compact operators, $L$-weakly compact operators, and $o$-weakly compact ones. As a main consequence, we deduce that the square of a positive $uaw$-Dunford-Pettis ($M$-weakly compact) operator on an order continuous Banach lattice is compact. In addition, various examples are given to make the concepts and hypotheses more understandable.

Denote by $B_{UDP}(E),B_{DP}(E),K_{uaw}(E),K_{un}(E)$ the spaces of all $uaw$-Dunford-Pettis, Dunfor-Pettis, $uaw$-compact and $un$-compact operators on a Banach lattice $E$, respectively.
For other necessary terminology on vector and Banach lattice, we refer the reader to \cite{AA, AB}.

\section{Main Results}
\begin{prop}\label{2}
 Suppose that $E$ is a Banach lattice whose dual space is order continuous and $X$ is a Banach space. Then, every Dunford-Pettis operator $T\colon E\rightarrow X$ is $uaw$-Dunford-Pettis.
\end{prop}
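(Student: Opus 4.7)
The plan is to reduce the $uaw$-Dunford--Pettis property to the ordinary Dunford--Pettis property by showing that, under the hypothesis that $E^*$ is order continuous, every norm bounded $uaw$-null sequence in $E$ is in fact weakly null. Once that reduction is in place, the conclusion is essentially tautological.

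Concretely, I would start with an arbitrary norm bounded sequence $(x_n) \subset E$ with $x_n \xrightarrow{uaw} 0$; by the definition of $uaw$-Dunford--Pettis recalled in the introduction, this is precisely the kind of sequence $T$ must send to a norm null sequence. The next step, which is the heart of the argument, is to invoke the characterization from \cite{Z}: when $E^*$ has order continuous norm, every norm bounded $uaw$-null net in $E$ converges weakly to $0$. Applied to our sequence this gives $x_n \xrightarrow{w} 0$. Then, since $T$ is Dunford--Pettis, $\|T(x_n)\| \to 0$, which is exactly what has to be checked.

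The only genuine obstacle is the implication ``$E^*$ order continuous $\Rightarrow$ norm bounded $uaw$-null sequences are weakly null''; this is where the lattice-theoretic input gets converted into a statement about linear functionals, and it is the unique place the hypothesis on $E^*$ enters the argument. Granting that result from \cite{Z}, the rest of the proof is a one-line chaining of definitions. Note that the hypothesis cannot be dropped: in $\ell_\infty$ the standard unit vectors $(e_n)$ are norm bounded and $uaw$-null (they are bounded and pairwise disjoint) yet are not weakly null, so the reduction step fails precisely when $E^*$ fails to be order continuous.
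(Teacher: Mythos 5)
Your proof is correct and is essentially identical to the paper's: both reduce the claim to the fact (\cite[Theorem 7]{Z}) that order continuity of $E'$ forces norm bounded $uaw$-null sequences to be weakly null, and then apply the Dunford--Pettis property. One caveat on your closing aside: the standard unit vectors of $\ell_\infty$ \emph{are} weakly null (any functional on $\ell_\infty$ is a bounded finitely additive measure, so $\mu(\{n\})\to 0$), so the correct witness that the hypothesis is essential is $\ell_1$, whose dual $\ell_\infty$ is not order continuous and where $(e_n)$ is $uaw$-null but not weakly null --- this is exactly the example the paper gives after the proposition.
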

\begin{proof}
Suppose $T\in B_{DP}(E,X)$ and $(x_n)$ is a norm bounded sequence in $E$ which is $uaw$-convergent to zero. By \cite[Theorem 7]{Z}, it is weakly convergent. By the assumption, $\|T(x_n)\|\rightarrow{0}$, as desired.
\end{proof}
Note that order continuity of $E'$ is essential in Proposition \ref{2} and can not be dropped. To see this, consider the identity operator $I$ on $\ell_1$. It follows from the Schur property of $\ell_1$ that $I$ is Dunford-Pettis. However it can not be $uaw$-Dunford-Pettis as the $uaw$-null sequence $(e_i)$ formed by the standard basis of $\ell_1$ is not norm convergent to zero. In addition, it can be easily seen that every $uaw$-Dunford-Pettis operator is automatically continuous but the converse is not true, in general; again, consider the identity operator on $\ell_1$.

\begin{rem} \label{7000}
Suppose that $E$ is an $AM$-space and $X$ is a Banach space. Since the lattice operations in $E$ are weakly sequentially continuous \cite[Theorem 4.31]{AB} and in view of Proposition \ref{2}, it can be seen that an operator $T:E\rightarrow X$ is $uaw$-Dunford-Pettis if and only if it is Dunford-Pettis. Suppose further that $E$ is an atomic order continuous Banach lattice. It follows from \cite[Proposition 2.5.23]{Ni} that if an operator $T\colon E\rightarrow X$ is $uaw$-Dunford-Pettis, then it is a Dunford-Pettis operator.
\end{rem}
It is known that every compact operator is Dunford-Pettis. In the following example, we show that in the case of a $uaw$-Dunford-Pettis operator, the situation is different.
\begin{exam} Let $T\colon \ell_1\rightarrow \rr$ be defined by $T((x_n))=\sum\limits_{n=1}^{\infty}x_n$ for every $(x_n)\in\ell_1$. Since $T$ is of finite rank, it is compact. It follows by considering the standard basis of $\ell_1$ that $T$ can not be a $uaw$-Dunford-Pettis operator.
\end{exam}
A typical example of a Dunford-Pettis operator which is not compact is the identity operator on $\ell_1$ because of the Schur property. But this operator does not do the job for the $uaw$-case since it is not also $uaw$-Dunford-Pettis. Nevertheless, there is a good news if one considers the Lozanovsky-like example as it is described in \cite[Page 289, Exercise 10]{AB}.
\begin{exam} \label{1}
Consider the operator $T\colon C[0,1]\rightarrow c_0$ given by
\begin{center}
$T(f)=(\int_0^1 f(t)\sin t \: dt,\int_0^1 f(t)\sin 2t \: dt,\ldots )$
\end{center}
for every $f\in C[0,1]$. It can be easily seen that $T$ is not order bounded so that by \cite[Theorem 5.7]{AB}, $T$ is not compact. Denote by $(f_n)\subseteq  C[0,1]$ a norm bounded sequence for which $f_n\xrightarrow{uaw}0$ holds.
It follows from \cite[Theorem 7]{Z} that $f_n\xrightarrow{w} 0$ and that $||T(f_n)||=\sup_{m\geq 1}|\int_0^1f_n(t)\sin mt \: dt|\leq\int_0^1|f_n(t)|dt\rightarrow 0.$ Hence, the noncompact operator $T$ is an $uaw$-Dunford-Pettis operator.
\end{exam}

As in \cite[Proposition 9.1]{KMT} and using \cite[Theorem 4 and Proposition 14]{Z}, we have the same conditions for $uaw$-compactness and sequentially $uaw$-compactness of an operator.
\begin{prop}\label{1000}
Let  $T\colon E\rightarrow F$  be an operator
 between Banach lattices.
\begin{itemize}
\item[(i)] If $F$ is order continuous and  has a quasi-interior point then $T$ is $uaw$-compact iff it is sequentially $uaw$-compact;
\item[(ii)] If $F$ is order continuous and $T$ is $uaw$-compact then $T$ is sequentially $uaw$-compact;
\item[(iii)] If $F$ is an atomic KB-space then $T$ is $uaw$-compact and sequentially $uaw$-compact.
\end{itemize}
\end{prop}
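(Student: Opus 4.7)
The strategy is to follow the scheme of \cite[Proposition 9.1]{KMT} which handled the $un$-analogue, substituting $uaw$-convergence for $un$-convergence throughout and using the parallel results \cite[Theorem 4 and Proposition 14]{Z} (the former establishing that $uaw$-convergence is topological under order continuity, the latter giving metrizability on bounded sets in the presence of a quasi-interior point) in place of the corresponding $un$-inputs.

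For part (i), under the hypothesis that $F$ is order continuous with a quasi-interior point, I would invoke \cite[Proposition 14]{Z} to conclude that the $uaw$-topology restricted to norm-bounded subsets of $F$ is metrizable; concretely, one expects a characterization of the form $y_\alpha \xrightarrow{uaw} y$ on bounded sets $\Leftrightarrow \||y_\alpha - y|\wedge u\|\to 0$ (or its weak analogue) for a fixed quasi-interior $u$, whose tail seminorms generate a metric. Since compactness and sequential compactness coincide on metrizable spaces, applying this to the bounded set $T(B_X)$ yields the equivalence of $uaw$-compactness and sequential $uaw$-compactness of $T$.

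For part (ii), only order continuity of $F$ is assumed, and just the implication $uaw$-compact $\Rightarrow$ sequentially $uaw$-compact is claimed. Given a bounded sequence $(x_n)\subset X$, the idea is to produce a $uaw$-convergent subsequence from the $uaw$-convergent subnet of $(Tx_n)$ guaranteed by hypothesis. The plan is to work inside the separable sublattice generated by $\{Tx_n\}$, which inherits order continuity from $F$ and admits a natural quasi-interior point (e.g.\ a weighted sum $\sum 2^{-n}|Tx_n|/(1+\|Tx_n\|)$); this reduces the situation to the metrizable setting of part (i), where the convergent subnet forces a convergent subsequence.

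For part (iii), one must show that every bounded operator into an atomic KB-space is automatically $uaw$-compact and sequentially $uaw$-compact; this reduces to the claim that every norm-bounded sequence (resp.\ net) in such an $F$ admits a $uaw$-convergent subsequence (resp.\ subnet). For a bounded sequence $(y_n)$, a diagonal extraction over the atoms of $F$ produces a coordinatewise convergent subsequence $(y_{n_j})$; Fatou applied via the KB-property forces the coordinatewise limit $y$ to lie in $F$, and then $|y_{n_j}-y|\wedge u\xrightarrow{w}0$ for each $u\in F_+$ follows from atomicity together with order continuity (implicit in the KB-property) through a standard dominated-convergence-type argument. The net version is handled analogously using compactness of the unit ball of the bidual in the weak$^*$-topology and the KB-property to keep the limit inside $F$. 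The main obstacle will be this last part: verifying the $uaw$-convergence of the extracted subsequence rigorously, since one must interleave atomicity (which controls coordinates), the KB-property (which pulls weak$^*$-limits back into $F$), and order continuity (which makes the meets $|y_{n_j}-y|\wedge u$ weakly null).
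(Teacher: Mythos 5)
Your proposal matches the paper's approach: the paper offers no written proof at all, merely asserting that the argument of \cite[Proposition 9.1]{KMT} carries over with \cite[Theorem 4 and Proposition 14]{Z} substituted for the corresponding $un$-inputs, which is exactly the scheme you follow (metrizability of the unbounded topology on bounded sets for (i), reduction to a separable sublattice for (ii), and the atomic KB-space characterization for (iii)). One simplification worth noting: since \cite[Theorem 4]{Z} says $uaw$- and $un$-convergence coincide whenever the lattice is order continuous, and atomic KB-spaces are order continuous, all three parts follow directly from the corresponding parts of \cite[Proposition 9.1]{KMT} used as black boxes, which spares you the delicate diagonal/Fatou reconstruction you flag as the main obstacle in part (iii).
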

\begin{rem}
The fact which is used in proof of \cite[Proposition 9.1, (i)]{KMT} is that $un$-topology on a Banach lattice $E$ is metrizable if and only if $E$ has a quasi-interior point. This result can be restated in term of $uaw$-topology provided that $E$ is order continuous. Note that order continuity is essential and can not be dropped; consider $E=\ell_{\infty}$. It is easy to see that $uaw$-topology and absolute weak topology agree on $B_E$. But $B_E$ is not weakly metrizable since $E^{'}$ is not separable. This implies that $E$ can not be metrizable with respect to the $uaw$-topology.
\end{rem}


Let us continue with several ideal properties.

\begin{prop} Let $S\colon E\rightarrow F$ and $T\colon F\rightarrow G$  be two operators between Banach lattices.
\begin{itemize}
\item[\em (i)] {If $T$ is (sequentially) $uaw$-compact and $S$ is continuous then $TS$ is (sequentially) $uaw$-compact}.
\item[\em (ii)] {If $T$ is a $uaw$-Dunford-Pettis operator and $S$ is either (sequentially) $un$-compact or $uaw$-compact then $TS$ is compact}.
\item[\em (iii)]
{ If $T$ is $uaw$-Dunford-Pettis and $S$ is Dunford-Pettis then $TS$ is Dunford-Pettis}.
\item[\em (iv)]{ If $T$ is continuous and $S$ is $uaw$-Dunford-Pettis, then $TS$ is $uaw$-Dunford-Pettis}.
\end{itemize}
\end{prop}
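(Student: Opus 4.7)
The plan is to treat the four parts largely independently, since each reduces to chasing definitions along a sequence (or net) and using that the $uaw$-Dunford-Pettis property is, in effect, a way of converting $uaw$-null inputs into norm-null outputs while the other properties are preserved under composition with continuous maps.

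For (i), I would start with a norm-bounded net $(x_\alpha)$ in $E$ (and with a bounded sequence in the sequential case). Continuity of $S$ makes $(S(x_\alpha))$ a norm-bounded net in $F$, and then the hypothesis on $T$ hands me a $uaw$-convergent subnet of $(T(S(x_\alpha)))$ in $G$. That is exactly what $uaw$-compactness of $TS$ demands. The sequential version of (i) is the verbatim argument with ``subnet'' replaced by ``subsequence.''

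Part (ii) is the delicate one and will carry the weight of the proposition. Given a norm-bounded sequence $(x_n)$ in $E$, the appropriate sequential compactness of $S$ produces a subsequence $(S(x_{n_k}))$ which is either $un$-convergent or $uaw$-convergent to some $y \in F$. The key observation is that $un$-convergence implies $uaw$-convergence (the net $|z_\alpha|\wedge u$ converges to zero in norm, hence weakly), so in both cases I can reduce to a bounded, $uaw$-null sequence $(S(x_{n_k}) - y)$ in the Banach lattice $F$. Applying the $uaw$-Dunford-Pettis hypothesis on $T$ then gives $\|T(S(x_{n_k})) - T(y)\| \to 0$, producing a norm-convergent subsequence of $(TS(x_n))$ and hence compactness of $TS$. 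The conceptual hurdle here is the mismatch between the net-based definition of $uaw$-compactness and the sequence-based definition of the $uaw$-Dunford-Pettis property; I read the parenthetical ``(sequentially)'' in the statement as distributing over both alternatives so that this mismatch does not arise.

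For (iii) I take a weakly null norm-bounded sequence $(x_n)$ in $E$, extract $\|S(x_n)\| \to 0$ from the Dunford-Pettis property of $S$, and then pass through the continuous operator $T$ (continuity having been observed just after Proposition \ref{2}) to conclude $\|TS(x_n)\| \to 0$. Part (iv) is almost identical: for a norm-bounded $uaw$-null sequence $(x_n)$ in $E$, the $uaw$-Dunford-Pettis property of $S$ gives $\|S(x_n)\|\to 0$, and continuity of $T$ propagates this to $\|TS(x_n)\|\to 0$. The only substantive ingredient beyond definition-chasing is the observation in part (ii) that $un$-convergence implies $uaw$-convergence; everything else is formal.
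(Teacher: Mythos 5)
Your proposal is correct and follows essentially the same definition-chasing route as the paper's proof, including handling part (ii) by extracting a subsequence with $S(x_{n_k})\xrightarrow{uaw}y$ (with the $un$-compact case subsumed via the implication that $un$-convergence yields $uaw$-convergence) and then applying the $uaw$-Dunford-Pettis property of $T$ to the bounded $uaw$-null sequence $S(x_{n_k})-y$. The only (harmless) divergence is in (iii), where you finish by applying continuity of $T$ to the norm-null sequence $(S(x_n))$, whereas the paper first observes $S(x_n)\xrightarrow{uaw}0$ and then invokes the $uaw$-Dunford-Pettis property of $T$; both steps are valid.
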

\begin{proof}
$(i)$ We prove the results for the sequence case. For nets, the proof is similar. Suppose $(x_n)\subseteq E$ is a bounded sequence. By the assumption, the sequence $(S(x_n))$ is also norm bounded.  Therefore, there is a subsequence $TS(x_{n_k})$ which is $uaw$-convergent.

$(ii)$ Suppose $(x_n)$ is a bounded sequence in $E$. There is a subsequence $(x_{n_k})$ such that $S(x_{n_k})\xrightarrow{uaw}x$ for some $x\in F.$ Thus, by the hypothesis, $\|T(S(x_{n_k}))-T(S(x))\|\rightarrow 0$, as desired.

$(iii)$ Suppose $(x_n)$ is a sequence in $E$ which is weakly null. By the assumption, $\|S(x_n)\|\rightarrow 0$. It follows that $S(x_n)\xrightarrow{uaw}0$. Again, this implies that $\|TS(x_n)\|\rightarrow 0$.

$(iv)$ Suppose $(x_n)$ is a norm bounded sequence in $E$ which is $uaw$-null. By the hypothesis, $\|S(x_n)\|\rightarrow 0$ so that $\|T(S(x_n))\|\rightarrow 0$, as desired.
\end{proof}
\begin{coro}
Suppose $E$ is a Banach lattice. Then $B_{UDP}(E)$ is a subalgebra of $B(E)$.
\end{coro}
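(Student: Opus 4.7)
The plan is to verify the two standard subalgebra axioms: that $B_{UDP}(E)$ is a linear subspace of $B(E)$ and that it is closed under composition. I would first briefly note that $B_{UDP}(E) \subseteq B(E)$, which has already been observed in the discussion following Proposition \ref{2}.

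For the linear structure, I would take $T_1, T_2 \in B_{UDP}(E)$ and scalars $\lambda, \mu$. Given any norm bounded sequence $(x_n) \subseteq E$ with $x_n \xrightarrow{uaw} 0$, the definition yields $\|T_1(x_n)\| \to 0$ and $\|T_2(x_n)\| \to 0$. The triangle inequality
\[
\|(\lambda T_1 + \mu T_2)(x_n)\| \leq |\lambda|\,\|T_1(x_n)\| + |\mu|\,\|T_2(x_n)\|
\]
then gives $\|(\lambda T_1 + \mu T_2)(x_n)\| \to 0$, so $\lambda T_1 + \mu T_2 \in B_{UDP}(E)$.

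For closure under composition, the work has essentially been done already: part (iv) of the preceding proposition says that if $T$ is continuous and $S$ is $uaw$-Dunford-Pettis, then $TS$ is $uaw$-Dunford-Pettis. Since every $uaw$-Dunford-Pettis operator is continuous, taking both $T$ and $S$ in $B_{UDP}(E)$ immediately yields $TS \in B_{UDP}(E)$.

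There is no real obstacle here; the corollary is a direct packaging of the ideal-type property established in (iv) together with the trivial linearity check. I would therefore keep the write-up to a few lines, citing the previous proposition for composition and recording the linearity argument explicitly.
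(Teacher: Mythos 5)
Your proposal is correct and matches the paper's intent: the corollary is stated without proof precisely because it follows from part (iv) of the preceding proposition (using that every $uaw$-Dunford-Pettis operator is continuous) together with the routine linearity check you spell out. Nothing is missing.
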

In general, we have $K(E)\subseteq K_{un}(E)\subseteq K_{uaw}(E).$ In the next discussion, we show that not every $uaw$-compact operator is $un$-compact.

\begin{exam} The inclusion $\ell_2\hookrightarrow \ell_{\infty}$ is weakly compact by \cite[Theorem 5.24]{AB}. Hence, it is sequentially $uaw$-compact because range of the operator is an $AM$-space. However it is not sequentially $un$-compact. Since by \cite[Theorem 2.3]{KMT}, it should be compact which is not possible.
\end{exam}

\begin{rem} $K_{un}(E)$ and $K_{uaw}(E)$ are not order closed in the usual order of the space of all continuous operators on $E$, as shown by \cite[Example 9.3]{KMT}; see also \cite[Theorem 4]{Z}.
\end{rem}

Following results are motivated by the Krengel's Theorem, see \cite[Theorem 5.9]{AB}.

\begin{thm}\label{3}
If $E$ is an $AL$-space and $F$ is a Banach lattice whose dual space is order continuous. Then every sequentially $uaw$-compact operator $T$ from $E$ into $F$ has a sequentially $uaw$-compact adjoint.
\end{thm}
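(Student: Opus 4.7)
The strategy is to sandwich sequential $uaw$-compactness of $T$ and $T'$ between weak compactness on the two sides, using Eberlein--\v{S}mulian and Gantmacher's theorem. Order continuity of $F'$ will boost sequential $uaw$-compactness of $T$ to weak compactness, and dually the AM-structure of $E'$ (coming from $E$ being AL) will let the weak compactness of $T'$ be downgraded to sequential $uaw$-compactness.

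First I would show that $T$ itself is weakly compact. Take any bounded sequence $(x_n)\subseteq E$. By sequential $uaw$-compactness there is a subsequence with $T(x_{n_k})\xrightarrow{uaw}y$ in $F$, and this subsequence is still norm bounded. Since $F'$ is order continuous, \cite[Theorem~7]{Z} (the same tool used in Proposition~\ref{2}) promotes this to $T(x_{n_k})\xrightarrow{w}y$. Hence $T(B_E)$ is relatively sequentially weakly compact, so $T$ is weakly compact by Eberlein--\v{S}mulian. By Gantmacher's theorem the adjoint $T'\colon F'\to E'$ is also weakly compact.

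Now take an arbitrary bounded sequence $(f_n)\subseteq F'$. The image $(T'(f_n))$ lies in the relatively weakly compact set $T'(B_{F'}\cdot\sup\|f_n\|)$, so applying Eberlein--\v{S}mulian in $E'$ yields a subsequence with $T'(f_{n_k})\xrightarrow{w}g$ for some $g\in E'$. To finish the argument one has to upgrade this weak convergence to $uaw$-convergence. Since $E$ is an AL-space, $E'$ is an AM-space, and by \cite[Theorem~4.31]{AB} the lattice operations in $E'$ are weakly sequentially continuous. Therefore $|T'(f_{n_k})-g|\xrightarrow{w}0$, and for each $u\in (E')^+$ the continuous lattice operation gives $|T'(f_{n_k})-g|\wedge u\xrightarrow{w}0$, i.e. $T'(f_{n_k})\xrightarrow{uaw}g$ in $E'$. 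This is exactly sequential $uaw$-compactness of $T'$.

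The main step to check carefully is the first one: one must ensure \cite[Theorem~7]{Z} applies to any bounded $uaw$-convergent sequence in $F$ (not only to null sequences) and to convergence toward a nonzero limit, but this is just a translation by $-y$ since both $(T(x_{n_k}))$ and $y$ are norm bounded. The rest is routine chaining of Eberlein--\v{S}mulian, Gantmacher, and weak sequential continuity of the lattice operations in the AM-space $E'$; no order boundedness of $T$ or further structural assumption on $F$ is needed beyond what is already in the hypothesis.
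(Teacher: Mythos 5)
Your proposal is correct and follows essentially the same route as the paper: use order continuity of $F'$ together with \cite[Theorem 7]{Z} to upgrade sequential $uaw$-compactness of $T$ to weak compactness, apply Gantmacher's theorem to get $T'$ weakly compact, and then use the $AM$-structure of $E'$ to pass back down to sequential $uaw$-compactness of $T'$. The only difference is that you spell out the final step (weak sequential continuity of the lattice operations in the $AM$-space $E'$, via \cite[Theorem 4.31]{AB}) which the paper leaves implicit.
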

\begin{proof} Let $T\colon E\rightarrow F$ be a sequentially $uaw$-compact operator. For every norm bounded sequence $(x_n)$  in $E$, the sequence $T(x_n)$  has a subsequence $T(x_{n_k})$ which is convergent in the $uaw$-topology. By \cite[Theorem 7]{Z}, the subsequence is weakly convergent. This implies that the operator $T$ is weakly compact. By the Gantmacher's theorem \cite[Theorem 5.23]{AB}, it follows that $T'$ is weakly compact. Since range of $T'$ is an $AM$-space, it is sequentially $uaw$-compact.
\end{proof}
\begin{rem}
Note that order continuity of $F'$ is essential and can not be removed. Consider the identity operator on $\ell_1$. One may verify that it is $uaw$-compact; for $\ell_1$ is an atomic $KB$-space, therefore using \cite[Theorem 7.5]{KMT} and \cite[Theorem 4]{Z}, yield the desired result. But its adjoint is the identity operator on $\ell_{\infty}$ which is not sequentially $uaw$-compact.
\end{rem}
\begin{thm}\label{CM1} If $E$ is an $AL$-space and $F$ is a reflexive Banach lattice. Then every order bounded sequentially $uaw$-compact operator $T$ from $E$ into $F$, has a weakly compact modulus.
\end{thm}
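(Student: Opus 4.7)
The plan is to exploit the reflexivity of $F$, which renders the conclusion almost automatic once the modulus is in hand.

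First, I would note that a reflexive Banach lattice is a $KB$-space, hence order continuous and in particular Dedekind complete. Because $T$ is order bounded and $F$ is Dedekind complete, the modulus $|T|$ exists as a positive order bounded operator from $E$ into $F$. Since every positive operator between Banach lattices is norm continuous, $|T|\colon E\to F$ is then a bounded linear operator.

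Reflexivity of $F$ now delivers the conclusion at once: the norm bounded set $|T|(B_E)\subseteq F$ is relatively weakly compact in $F$, so $|T|$ is weakly compact, as required.

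I expect no genuine obstacle; the reflexivity of the target space does the heavy lifting, while the sequential $uaw$-compactness of $T$ and the $AL$-structure of $E$ do not appear to be needed in this direct route. They do, however, parallel Theorem \ref{3}: reflexivity of $F$ forces $F'$ to be order continuous, so sequential $uaw$-compactness together with \cite[Theorem 7]{Z} implies $T$ itself is weakly compact. An alternative proof could therefore first upgrade $T$ to a weakly compact operator in this way and then transfer weak compactness from $T$ to $|T|$ via the band property of weakly compact operators out of an $AL$-space.
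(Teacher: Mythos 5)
Your proof is correct, but it is genuinely different from — and considerably more elementary than — the paper's argument. The paper first invokes Theorem \ref{3} (so that $T$, hence $T'$, is weakly compact via \cite[Theorem 7]{Z} and Gantmacher) and then appeals to \cite[Theorem 5.35]{AB}, the Aliprantis--Burkinshaw result that an order bounded weakly compact operator from an $AL$-space into a $KB$-space has a weakly compact modulus; this is where the $AL$-hypothesis on $E$ and the $uaw$-compactness of $T$ are actually consumed. You instead observe that reflexivity of $F$ does all the work: $F$ is a $KB$-space, hence Dedekind complete, so the modulus $|T|$ of the order bounded operator $T$ exists by Riesz--Kantorovich; it is positive, hence norm bounded, and any bounded operator into a reflexive space is weakly compact since $|T|(B_E)$ sits inside a multiple of the weakly compact ball $B_F$. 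Your route correctly exposes that, as stated, the conclusion needs neither the $AL$-structure of $E$ nor the sequential $uaw$-compactness of $T$; those hypotheses (and the paper's machinery) would only become essential if one weakened ``reflexive'' to ``$KB$-space'' in the hypothesis on $F$, where weak compactness of $|T|$ is no longer automatic and \cite[Theorem 5.35]{AB} is genuinely needed.
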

\begin{proof} By Theorem \ref{3}, if $T$ is sequentially $uaw$-compact then $T'$ is a sequentially $uaw$-compact operator. Note that $E'$ is an $AM$-space. So, the operator $T'$ is weakly compact and the result follows from  \cite[Theorem 5.35]{AB}.
\end{proof}

\begin{prop} Let $E$ be a Banach lattice whose dual space is atomic and order continuous. Also let $F$ be a Banach lattice whose dual is order continuous. Then, every (sequentially) $un$-compact operator  $T\colon E\rightarrow F$  has a (sequentially) $un$-compact adjoint operator $T'\colon F'\rightarrow E'$.
\end{prop}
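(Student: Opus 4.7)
My plan is to route the argument through weak compactness and Gantmacher's theorem: first I would use the order continuity of $F'$ to upgrade the $un$-compactness of $T$ to weak compactness, and then use the atomic order continuity of $E'$ to downgrade the resulting weak compactness of $T'$ back to $un$-compactness of $T'$.

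For the first step, I take a norm bounded sequence $(x_n) \subseteq E$. Since $T$ is (sequentially) $un$-compact, some subsequence $T(x_{n_k})$ is $un$-convergent in $F$, hence in particular $uaw$-convergent. Because $F'$ is order continuous, \cite[Theorem 7]{Z} then gives that this subsequence is weakly convergent. Thus $T$ is weakly sequentially compact, and by the Eberlein--\v{S}mulian theorem $T$ is weakly compact; Gantmacher's theorem \cite[Theorem 5.23]{AB} now delivers that $T'\colon F' \to E'$ is weakly compact. For the genuinely net-theoretic case I would argue identically, passing to $un$-convergent subnets and then appealing to Eberlein--\v{S}mulian in order to turn weak sequential compactness of $T(B_E)$ into relative weak compactness.

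For the second step, I take any norm bounded $(f_n) \subseteq F'$ and extract, using the weak compactness of $T'$, a subsequence $T'(f_{n_k})$ converging weakly to some $g \in E'$. The crux of the argument is then to show that weak convergence of norm bounded sequences in $E'$ implies $un$-convergence. Since $E'$ is Dedekind complete, being a dual Banach lattice, the band projection onto the one-dimensional band generated by each atom of $E'$ is a bounded operator, and hence the associated coordinate functional lies in $E''$; weak convergence therefore forces coordinatewise convergence along every atom. A truncation argument using order continuity, namely the fact that any positive $u \in E'$ decomposes into a finite sum along atoms plus a tail of arbitrarily small norm, then yields $\bigl\||T'(f_{n_k}) - g|\wedge u\bigr\|\to 0$ for every $u\in E'_+$, i.e., $T'(f_{n_k})\xrightarrow{un}g$. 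The net case goes through in exactly the same way, with subnets in place of subsequences.

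The hard part will be the passage from weak to $un$-convergence in $E'$ in the second step: this is the only place where both the atomic and the order continuity hypotheses on $E'$ are genuinely used, atomicity supplying enough coordinate functionals in $E''$ to read off coordinatewise convergence from weak convergence, and order continuity supplying the norm-small tails of positive elements needed to turn bounded coordinatewise convergence into $un$-convergence.
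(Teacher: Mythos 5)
Your argument is correct and follows essentially the same route as the paper: upgrade $un$-compactness of $T$ to weak compactness via the order continuity of $F'$, apply Gantmacher's theorem to get $T'$ weakly compact, and then use that weakly convergent bounded sequences (nets) in the atomic order continuous lattice $E'$ are $un$-convergent. The only difference is that the paper cites \cite[Theorem 6.4]{DOT} and \cite[Proposition 4.16]{KMT} for the first and last steps, whereas you supply a direct (and valid) atom-by-atom truncation argument for the latter.
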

\begin{proof} For any norm bounded sequence $(x_n)$ in $E$, the sequence $(T(x_n))$ has a subsequence which is $un$-convergent to zero by $un$-compactness. By \cite[Theorem 6.4]{DOT}, it is weakly convergent. Hence, the operator $T$ is weakly compact.  It follows from Gantmacher's theorem that $T'$ is weakly compact. By \cite[Proposition 4.16]{KMT}, the operator $T'$ is $un$-compact.
\end{proof}

Recall that, see \cite{AB} for details, an operator $T\colon E\rightarrow F$ is $M$-weakly compact if for every norm bounded disjoint sequence $(x_n)$ one has $||Tx_n||\rightarrow 0$; $T$ is $L$-weakly compact if every disjoint sequence $(y_n)$ in the solid hull of $T(B_E)$ is norm null; see \cite{BW} for a recent progress on this topic.
\begin{prop} \label{5}
If $T\colon E\rightarrow F$ is a $uaw$-Dunford-Pettis operator then $T$ is $M$-weakly compact; in particular, it is weakly compact. Also, if $F'$ is order continuous and $T\colon E\rightarrow F$ is a $uaw$-compact operator, then $T$ is weakly compact.
\end{prop}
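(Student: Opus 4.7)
My plan is to treat the two assertions separately. For the first claim, the key observation will be that \emph{every norm bounded disjoint sequence in a Banach lattice is $uaw$-null}. Granting this, the $uaw$-Dunford-Pettis hypothesis applied to such a sequence immediately gives $\|Tx_n\|\to 0$, which is exactly $M$-weak compactness; the ``in particular'' assertion then follows from the classical fact that $M$-weakly compact operators are weakly compact (see \cite[Theorem 5.61]{AB}).

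To establish the auxiliary statement, I would fix a norm bounded disjoint sequence $(x_n)\subseteq E$ and a positive $u\in E$. The sequence $(|x_n|\wedge u)$ is disjoint and order bounded by $u$, so by a well-known result of Aliprantis--Burkinshaw (disjoint order bounded sequences in a Banach lattice are weakly null; cf.\ \cite[Theorem 4.34]{AB}) one has $|x_n|\wedge u\xrightarrow{w} 0$. Since $u\in E_+$ was arbitrary, $x_n\xrightarrow{uaw}0$, and the $uaw$-Dunford-Pettis hypothesis closes the argument.

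For the second claim, I would start from an arbitrary norm bounded sequence $(x_n)$ in $E$. By $uaw$-compactness of $T$ there is a subsequence with $T(x_{n_k})\xrightarrow{uaw} y$ for some $y\in F$. Because $F'$ is order continuous, \cite[Theorem 7]{Z}---used earlier in the paper to promote $uaw$-convergence to weak convergence on norm bounded sequences---yields $T(x_{n_k})\xrightarrow{w} y$. Hence $T(B_E)$ is relatively weakly sequentially compact, and the Eberlein--\v{S}mulian theorem gives that it is relatively weakly compact; thus $T$ is weakly compact.

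The main obstacle is isolating the right classical fact used in the first paragraph: one must recognize that the pointwise meets $|x_n|\wedge u$ inherit both disjointness and an order bound from $(x_n)$, so that the Aliprantis--Burkinshaw theorem on weakly null disjoint order bounded sequences applies. Once that observation is in place, both halves reduce to bookkeeping with the already-established \cite[Theorem 7]{Z} and standard references.
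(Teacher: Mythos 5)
Your proposal is correct and follows essentially the same route as the paper: the paper cites \cite[Lemma 2]{Z} for the fact that norm bounded disjoint sequences are $uaw$-null (which you reprove inline via order bounded disjoint sequences being weakly null) and then invokes \cite[Theorem 7]{Z} for the second assertion exactly as you do. The only difference is that you spell out the details the paper leaves to citations.
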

\begin{proof}  If $(x_n)$ is norm bounded disjoint sequence in $E$, by \cite[Lemma 2]{Z}, $x_n\xrightarrow{uaw} 0$. Hence, $||Tx_n||\rightarrow 0$.
The second part follows from \cite[Theorem 7]{Z}.
\end{proof}
For the converse, we have the following.
\begin{thm}\label{*}
Suppose $E$ and $F$  are Banach lattices such that either $E$ or $F$ is order continuous. Then every positive $M$-weakly compact operator from $E$ into $F$ is $uaw$-Dunford-Pettis.
\end{thm}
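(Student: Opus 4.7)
The plan is to reduce to the positive case, argue by contradiction, and extract an almost-disjoint approximating subsequence to which $M$-weak compactness of $T$ applies directly.

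First I would reduce to $x_n\ge 0$: by positivity of $T$, $|Tx_n|\le T|x_n|$, giving $\|Tx_n\|\le\|T|x_n|\|$; and $|x_n|\xrightarrow{uaw}0$ iff $x_n\xrightarrow{uaw}0$, since $||x_n|-0|\wedge u=|x_n-0|\wedge u$ for every $u\in E_+$. So I may assume $x_n\ge 0$. Then suppose for contradiction that $\|Tx_n\|\not\to 0$; passing to a subsequence, $\|Tx_n\|\ge\epsilon>0$ for all $n$.

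In the case $E$ is order continuous: for each $u\in E_+$, the sequence $(x_n\wedge u)$ is positive, lies in the order interval $[0,u]$, and is weakly null (from the $uaw$-null hypothesis). Using the classical fact that positive order-bounded weakly null sequences in an order continuous Banach lattice are norm null, I obtain $\|x_n\wedge u\|\to 0$ for every $u\in E_+$, i.e., $(x_n)$ is $un$-null. A Kadec-Pelczynski-style disjointification then produces a subsequence $(x_{n_k})$ and a disjoint positive sequence $(d_k)\subset E_+$ with $\|x_{n_k}-d_k\|\to 0$. By $M$-weak compactness $\|Td_k\|\to 0$, and by continuity $\|T(x_{n_k}-d_k)\|\to 0$; hence $\|Tx_{n_k}\|\to 0$, contradicting $\|Tx_n\|\ge\epsilon$. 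In the case $F$ is order continuous, a parallel argument on the $F$-side works, using that $T$ is $M$-weakly compact iff $T'$ is $L$-weakly compact, so that $T'(B_{F'_+})$ is almost order bounded in $E'$, and combining this with the OC of $F$ via an analogous disjointification applied to $(Tx_n)\subset F_+$ inside $F$.

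The main obstacle is the Kadec-Pelczynski-style disjointification: producing a disjoint positive sequence that approximates $(x_n)$ arbitrarily closely in norm out of the $un$-null hypothesis. This step relies on the ``escape-to-infinity'' behaviour intrinsic to $un$-convergence together with a sliding-hump selection, and is the key technical input that connects the $uaw$-null hypothesis with the disjoint-sequence characterization of $M$-weak compactness.
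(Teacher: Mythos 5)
Your reduction to positive sequences and your treatment of the case where $E$ is order continuous are sound, though heavier than necessary: granting the Kadec--Pe\l czy\'nski-type lemma (a bounded $un$-null sequence in an order continuous Banach lattice has a subsequence $(x_{n_k})$ admitting a disjoint $(d_k)$ with $\|x_{n_k}-d_k\|\to 0$), the contradiction with $M$-weak compactness goes through. That lemma is true and available in the $uo$/$un$ literature, but you leave it as a black box, and the paper avoids it entirely: by Meyer-Nieberg's characterization of $M$-weakly compact operators (\cite[Theorem 5.60]{AB}) one picks a single $u\in E_+$ with $\|T(x_n)-T(x_n\wedge u)\|<\varepsilon/2$ \emph{uniformly in} $n$, and then the definition of $uaw$-convergence hands you $x_n\wedge u\xrightarrow{w}0$ inside the order interval $[0,u]$, so order continuity of $E$ gives $\|x_n\wedge u\|\to 0$ directly. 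No disjointification is needed.

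The genuine gap is your second case ($F$ order continuous, $E$ arbitrary). None of the three ingredients you gesture at closes the argument. (i) There is no reason $(Tx_n)$ should be $un$-null (or $uaw$-null) in $F$: transferring unbounded convergence through $T$ is exactly the $uaw$-continuity property that the paper shows a general operator need not have, so the "analogous disjointification applied to $(Tx_n)$" has no hypothesis to start from. (ii) Even if you produced a disjoint sequence in $F$ approximating $(Tx_{n_k})$, $M$-weak compactness of $T$ says nothing about disjoint sequences in the \emph{range}; you would need $T$ itself to be $L$-weakly compact, which does not follow from $M$-weak compactness. (iii) The duality route via $T'(B_{F'})$ being almost order bounded yields an estimate of the form $\|Tx_n\|\le \langle x_n,\phi\rangle+\varepsilon M$ for some $\phi\in E'_+$, but $\langle x_n,\phi\rangle\to 0$ is not available: a bounded $uaw$-null sequence need not be weakly null when $E'$ fails order continuity (the standard basis of $\ell_1$ paired with $\mathbf{1}\in\ell_\infty$). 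The paper's Meyer-Nieberg step repairs all of this at once: with $\|T(x_n)-T(x_n\wedge u)\|<\varepsilon/2$ uniformly, the sequence $T(x_n\wedge u)$ is weakly null and lies in $[0,Tu]$, so order continuity of $F$ (rather than of $E$) forces $\|T(x_n\wedge u)\|\to 0$. You should replace your second case with this argument, or at minimum supply a mechanism that moves the $uaw$-null hypothesis across $T$.
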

\begin{proof}
Suppose $(x_n)$ is a bounded positive $uaw$-null sequence in $E$ and $\varepsilon>0$ is arbitrary. By \cite[Theorem 5.60]{AB} due to Meyer-Nieberg, there is a positive $u\in E$ with $\|T(x_n)-T(x_n\wedge u)\|<\frac{\varepsilon}{2}$. First, suppose $E$ is order continuous; since $x_n\wedge u\xrightarrow{w}0$ and the sequence is order bounded, by \cite[Theorem 4.17]{AB}, we conclude that  $\|x_n\wedge u\|\rightarrow 0$ so that $\|T(x_n\wedge u)\|\rightarrow 0$. Now, assume $F$ is order continuous; $x_n\wedge u\xrightarrow{w}0$ results in $T(x_n\wedge u)\xrightarrow{w}0$. Note that this sequence is order bounded so that by \cite[Theorem 4.17]{AB}, $\|T(x_n\wedge u)\|\rightarrow 0$. In any case, we see that $\|T(x_n)\|<\varepsilon$ for sufficiently large $n$, as claimed.
\end{proof}
\begin{coro}\label{***}
Suppose either $E$ or $F$ is order continuous. Then every $L$-weakly compact lattice homomorphism from $E$ to $F$ is $uaw$-Dunford-Pettis.
\end{coro}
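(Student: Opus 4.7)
The plan is to reduce Corollary \ref{***} to Theorem \ref{*} by showing that an $L$-weakly compact lattice homomorphism is automatically $M$-weakly compact; once this reduction is in place, the conclusion follows immediately since lattice homomorphisms are positive and the order-continuity hypothesis on $E$ or $F$ is exactly what Theorem \ref{*} requires.

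Concretely, let $T\colon E\to F$ be an $L$-weakly compact lattice homomorphism and let $(x_n)$ be a norm bounded disjoint sequence in $E$. After rescaling we may assume $(x_n)\subseteq B_E$. The key observation is that lattice homomorphisms preserve disjointness, so $(Tx_n)$ is a disjoint sequence lying inside $T(B_E)$, and in particular inside the solid hull of $T(B_E)$. The defining property of $L$-weak compactness then forces $\|Tx_n\|\to 0$, so $T$ is $M$-weakly compact. Since $T$ is positive and one of $E$, $F$ is order continuous, Theorem \ref{*} applies and delivers that $T$ is $uaw$-Dunford-Pettis, finishing the proof.

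There is no real obstacle here: the only step requiring care is the trivial observation that $Tx_n \wedge Tx_m = T(x_n \wedge x_m) = 0$ for $n\neq m$, which is the content of being a lattice homomorphism. Everything else is a direct invocation of Theorem \ref{*}, so the corollary is essentially a packaging statement that highlights the role of lattice homomorphisms in linking $L$-weak compactness to $M$-weak compactness.
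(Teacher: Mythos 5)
Your proof is correct and follows the same route as the paper: reduce to Theorem \ref{*} by showing that an $L$-weakly compact lattice homomorphism is $M$-weakly compact. The paper simply cites this reduction as an exercise in Aliprantis--Burkinshaw, whereas you spell out the (correct) disjointness-preservation argument explicitly.
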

\begin{proof}
It can be verified easily that $T$ is $M$-weakly compact ( for example see \cite[Page 337, Exercise 4]{AB}). Now, the conclusion follows from Theorem \ref{*}.
\end{proof}
\begin{rem}
Suppose $E$ and $F$ are Banach lattices. An operator $T:E\to F$ is said to be $uaw$-continuous if it maps bounded $uaw$-null sequences to $uaw$-null ones. It can be verified that every $uaw$-Dunford-Pettis operator is $uaw$-continuous but the converse is not true, in general; consider the identity operator on $\ell_1$. $L$-weakly compact operators are fruitful tools for the following.
\end{rem}
\begin{thm}\label{**}
Suppose $E$ is a Banach lattice and $F$  is an order continuous Banach lattice. Then every $L$-weakly compact $uaw$-continuous operator from $E$ into $F$ is $uaw$-Dunford-Pettis.
\end{thm}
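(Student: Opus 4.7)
The plan is to mirror the strategy of Theorem~\ref{*}, replacing the $M$-weakly compact characterisation of Meyer--Nieberg with its $L$-weakly compact counterpart. Let $(x_n)$ be a norm bounded $uaw$-null sequence in $E$; after a harmless rescaling I may assume $(x_n)\subseteq B_E$. The $uaw$-continuity hypothesis on $T$ immediately produces $T(x_n)\xrightarrow{uaw}0$ in $F$, so the task reduces to upgrading this $uaw$-null conclusion in $F$ to the norm-null conclusion $\|T(x_n)\|\to 0$.

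Fix $\varepsilon>0$. The key input is the $L$-weakly compact analogue of Meyer--Nieberg's theorem: since $T$ is $L$-weakly compact, there exists some $u\in F_+$ such that
\[
\bigl\|(|T(y)|-u)^+\bigr\|<\frac{\varepsilon}{2}\quad\text{for every } y\in B_E.
\]
Specialising to $y=x_n$ disposes of the ``tail'' piece in the elementary decomposition $|T(x_n)|=|T(x_n)|\wedge u+(|T(x_n)|-u)^+$.

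It then remains to show $\bigl\||T(x_n)|\wedge u\bigr\|\to 0$. By the very definition of $uaw$-convergence applied to $T(x_n)\xrightarrow{uaw}0$, we have $|T(x_n)|\wedge u\xrightarrow{w}0$, and moreover this sequence lies in the order interval $[0,u]$. Since $F$ is order continuous, \cite[Theorem 4.17]{AB} forces $\bigl\||T(x_n)|\wedge u\bigr\|\to 0$, exactly as in the order continuous branch of the proof of Theorem~\ref{*}. Combining the two estimates yields $\limsup_n\|T(x_n)\|\le\varepsilon/2$, and since $\varepsilon>0$ was arbitrary, $\|T(x_n)\|\to 0$, which is precisely the $uaw$-Dunford--Pettis property.

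The main delicacy is pinning down and correctly invoking the $L$-weakly compact analogue of \cite[Theorem 5.60]{AB}; once this Meyer--Nieberg type description of $L$-weakly compact operators is in hand, the rest of the argument is essentially a transcription of the order continuous case of Theorem~\ref{*}, with the $uaw$-continuity hypothesis taking over the role that positivity of $(x_n)$ played there in allowing one to replace the domain-side meet $x_n\wedge u$ by the range-side meet $|T(x_n)|\wedge u$.
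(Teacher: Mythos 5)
Your argument is correct and is essentially the paper's own proof: both decompose $|T(x_n)|$ as $|T(x_n)|\wedge u+(|T(x_n)|-u)^+$, control the second summand uniformly via the $L$-weakly compact part of \cite[Theorem 5.60]{AB}, and kill the first using $uaw$-continuity of $T$ together with order continuity of $F$ and \cite[Theorem 4.17]{AB}. The only cosmetic difference is that you work with a general bounded $uaw$-null sequence rather than first reducing to positive ones, which is if anything slightly cleaner since $T$ is not assumed positive here.
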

\begin{proof}
Suppose $(x_n)$ is a bounded positive $uaw$-null sequence in $E$ and $\varepsilon>0$ is arbitrary. By \cite[Theorem 5.60]{AB}, there is a positive $u\in F$ with $\||T(x_n)|-|T(x_n)|\wedge u\|<\frac{\varepsilon}{2}$. Since $T(x_n)\xrightarrow{uaw}0$, we see that $|T(x_n)|\wedge u\xrightarrow{w}0$. Note that this sequence is order bounded so that by \cite[Theorem 4.17]{AB}, $\||T(x_n)|\wedge u\|\rightarrow 0$. Therefore, $\|T(x_n)\|<\varepsilon$ for sufficiently large $n$, as claimed.
\end{proof}
In the following example, we show that adjoint of a $uaw$-Dunford-Pettis operator need not be $uaw$-Dunford-Pettis.
\begin{exam}\label{2500}
Consider the operator $T$ given in Example \ref{1}. We claim that its adjoint is not $uaw$-Dunford-Pettis. The adjoint $T':\ell_1\rightarrow M[0,1]$ is defined via $T'((x_n))(f)=\Sigma_{n=1}^{\infty}x_n(\int_{0}^{1}f(t)\sin nt dt)$, in which $M[0,1]$ is the space of all regular Borel measures on $[0,1]$. Note that the standard basis $(e_n)$ is $uaw$-null. For each $n\in\Bbb N$, put $f_n(t)=\sin nt$. Then we have
\[\|T'(e_n)\|\ge \|T'(e_n)(f_n)\|=\int_{0}^{1}(\sin nt)^2 dt \nrightarrow 0.\]
\end{exam}
\begin{rem}
Observe that Example \ref{2500} can be employed to show that positivity assumption in Theorem \ref{*} and $uaw$-continuity hypothesis in Theorem \ref{**} are essential and can not be removed. The operator $T'$ is not positive; since $T$ is $uaw$-Dunford-Pettis, it is $M$-weakly-compact. By \cite[Theorem 5.67]{AB}, $T'$ is also $M$-weakly compact but as we see, it is not $uaw$-Dunford-Pettis. Furthermore, again, using \cite[Theorem 5.67]{AB} convinces us that $T'$ is also $L$-weakly compact. We claim that $T'$ is not $uaw$-continuous. Note that $e_n\xrightarrow{uaw}0$. For every $n\in \Bbb N$, consider $f_n(t)=\sin nt$. Also, since the sequence $(\sin n)$ is dense in $[-1,1]$, we can choose sufficiently large $n\in \Bbb N$ with $\sin n>\frac{1}{4}$. Suppose $\delta_{1}$ is the Dirac measure at point $x_0=1$. Then, $(T'(e_n)\wedge \delta_{1})(\sin nt)>\frac{1}{4}$.
\end{rem}
Now, recall that an operator $T:E\to X$ from a Banach lattice to a Banach space is $o$-weakly compact if it carries order intervals to weakly relatively compact sets. Compatible with \cite[Theorem 5.91 and Corollary 5.92]{AB} and \cite[Lemma 2]{Z}, one may verify the following.
\begin{prop}\label{1001}
Every $uaw$-Dunford-Pettis operator $T:E\to X$ from a Banach lattice to a Banach space is $o$-weakly compact.
\end{prop}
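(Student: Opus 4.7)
The plan is to verify the characterization of $o$-weakly compact operators directly on disjoint order-bounded sequences, then push the problem through the two cited ingredients: \cite[Lemma 2]{Z}, which says that every norm bounded disjoint sequence in a Banach lattice is $uaw$-null, and \cite[Theorem 5.91 and Corollary 5.92]{AB}, which give the equivalence between $o$-weak compactness of $T\colon E\to X$ and the condition $\|T(x_n)\|\to 0$ for every order bounded disjoint sequence $(x_n)\subseteq E$.

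First I would fix an order bounded disjoint sequence $(x_n)\subseteq E$, say with $|x_n|\le u$ for some $u\in E_+$. Such a sequence is automatically norm bounded (by $\|u\|$) and, being disjoint, satisfies $x_n\xrightarrow{uaw}0$ by \cite[Lemma 2]{Z}. Hence the hypothesis on $T$ applies and yields $\|T(x_n)\|\to 0$. By the characterization recalled above, $T$ carries every order interval of $E$ to a relatively weakly compact subset of $X$, which is exactly the definition of $o$-weak compactness. An equally short alternative route is to quote Proposition \ref{5} to obtain $M$-weak compactness of $T$ and then invoke \cite[Corollary 5.92]{AB} directly; but the disjoint sequence argument is essentially the same in both cases and makes the use of the $uaw$-Dunford--Pettis property transparent.

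There is no real obstacle. The only thing to be careful about is invoking the right characterization from \cite{AB}: one must phrase $o$-weak compactness in the ``disjoint sequence'' form rather than its definitional ``order interval'' form, so that the reduction to the $uaw$-null disjoint sequence $(x_n)$ via \cite[Lemma 2]{Z} is legitimate. Once that is in place, the argument is a one-line application of the $uaw$-Dunford--Pettis property.
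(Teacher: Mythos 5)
Your proposal is correct and follows exactly the route the paper intends: the paper gives no written proof but points to \cite[Theorem 5.91 and Corollary 5.92]{AB} and \cite[Lemma 2]{Z}, and your argument---take an order bounded disjoint sequence, note it is norm bounded and $uaw$-null by \cite[Lemma 2]{Z}, apply the $uaw$-Dunford-Pettis hypothesis, and conclude via the disjoint-sequence characterization of $o$-weak compactness---is precisely that verification.
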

\begin{prop}\label{1002}
The square of a $uaw$-Dunford-Pettis operator carries order intervals into norm totally bounded sets.
\end{prop}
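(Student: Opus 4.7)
My strategy is to leverage two consequences of the $uaw$-Dunford--Pettis property already proved: $T$ is $o$-weakly compact (Proposition \ref{1001}) and $M$-weakly compact (Proposition \ref{5}). Fix $x \in E_+$ and a sequence $(x_n) \subseteq [-x, x]$; I need to exhibit a norm-Cauchy subsequence of $(T^2 x_n)$. Using the $o$-weak compactness of $T$, the image $T([-x, x])$ is weakly relatively compact, so Eberlein--\v{S}mulian yields a subsequence (still denoted $(x_n)$) with $Tx_n \xrightarrow{w} y$ in $E$ for some $y$. The task then reduces to showing $\|T(Tx_n) - T(y)\| \to 0$.

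To pass from weak convergence of $(Tx_n)$ to norm convergence of $(T^2 x_n)$, I would invoke the Meyer--Nieberg characterization of $M$-weakly compact operators (\cite[Theorem 5.60]{AB}), in the same fashion as in the proof of Theorem \ref{*}: applied to the positive and negative parts $(Tx_n - y)^{\pm}$ of the bounded sequence $z_n := Tx_n - y$, for every $\varepsilon > 0$ there exists $u \in E_+$ with $\|T(z_n) - T(z_n^+ \wedge u) + T(z_n^- \wedge u)\| < \varepsilon/2$ uniformly in $n$. This reduces the norm estimate to controlling $T$ on the order-bounded sequences $(z_n^{\pm} \wedge u) \subseteq [0, u]$.

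For the order-bounded parts, I would apply the $o$-weak compactness of $T$ once more on $[0, u]$, extract weakly convergent sub-subsequences via Eberlein--\v{S}mulian, and assemble a norm-Cauchy subsequence through a diagonal argument. The main obstacle I anticipate is exactly this final step: going from weak convergence of $(z_n^{\pm} \wedge u)$ in $[0, u]$ to norm convergence of the $T$-images. To close this gap I would extract an almost-disjoint subsequence via a Kadec--Pe{\l}czy\'nski-type dichotomy, then use that bounded disjoint sequences are $uaw$-null by \cite[Lemma 2]{Z}, so that the $uaw$-Dunford--Pettis property of $T$ forces the $T$-images along the extracted subsequence to be norm null. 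Combined with the $\varepsilon/2$ tail bound, this delivers norm total boundedness of $T^2([-x, x])$.
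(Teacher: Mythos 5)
The paper gives no written proof of this proposition --- it is left as ``one may verify,'' with a pointer to \cite[Theorem 5.91 and Corollary 5.92]{AB} and \cite[Lemma 2]{Z} --- so your attempt has to stand on its own. Its first two stages are in order: Proposition \ref{1001} makes $T[-x,x]$ relatively weakly compact, Eberlein--\v{S}mulian extracts $Tx_n\xrightarrow{w}y$, and the Meyer--Nieberg estimate for the $M$-weakly compact operator $T$ (Proposition \ref{5}) reduces the problem to controlling $T$ on the order-bounded sequences $z_n^{\pm}\wedge u\subseteq[0,u]$.

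The gap you flag at the end is, however, a genuine one, and the repair you propose does not work. An order-bounded weakly null sequence need not admit an almost-disjoint subsequence, and need not be $uaw$-null along any subsequence: the Rademacher functions $(r_n)$ lie in $[-\mathbf{1},\mathbf{1}]$ and are weakly null in $L^2[0,1]$, yet $|r_n|\wedge|r_m|=\mathbf{1}$ for all $n,m$, and the paper itself records that $(r_n)$ has no $uaw$-convergent subsequence. The Kadec--Pe{\l}czy\'{n}ski dichotomy permits exactly this second alternative (a subsequence equivalent to the $\ell^2$-basis rather than an almost disjoint one), so you cannot bring \cite[Lemma 2]{Z} and the $uaw$-Dunford--Pettis hypothesis to bear on $(z_n^{\pm}\wedge u)$. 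Worse, the implication you would need to close the argument --- that a $uaw$-Dunford--Pettis operator sends order-bounded weakly null sequences to norm-null ones --- is false. Let $P$ be the orthogonal projection of $L^2[0,1]$ onto the closed span of the Rademachers. If $(f_n)$ is bounded and $uaw$-null, hence null in measure, then for $g$ in the unit ball of the range Khintchine's inequality gives $|\langle f_n,g\rangle|\le\delta+C\,\sup_n\|f_n\|_2\,\mu(|f_n|>\delta)^{1/4}$, so $\|Pf_n\|\to0$ and $P$ is $uaw$-Dunford--Pettis; nevertheless $P(r_n)=r_n$. Since moreover $P^2=P$ and $P\chi_{\{r_n=1\}}=\tfrac{1}{2}r_n$ with $\chi_{\{r_n=1\}}\in[0,\mathbf{1}]$, this same operator shows that the proposition cannot be established from the stated hypotheses alone: some extra assumption (positivity, disjointness preservation, or order-continuity conditions as in the subsequent corollaries) is required before any completion of your argument could succeed.
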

Now, we have the following.
\begin{thm}
Suppose $E$ is a Banach lattice and $T$ is a positive $uaw$-Dunford-Pettis operator on it. Then, for every positive operator $S$ dominated by $T^2$, $S^2$ is compact.
\end{thm}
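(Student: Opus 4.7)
The plan is to argue that $S$ inherits the $uaw$-Dunford-Pettis property from the domination $0\le S\le T^2$, whence $S^2$ is simultaneously $M$-weakly compact and carries order intervals into norm totally bounded sets; a Meyer-Nieberg style decomposition then forces $S^2$ to be compact.

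The first step is to verify that $S$ is itself $uaw$-Dunford-Pettis. By the ideal property established above, $T^2=T\circ T$ is $uaw$-Dunford-Pettis. For a bounded $uaw$-null sequence $(x_n)\subseteq E$, the sequence $(|x_n|)$ is bounded and $uaw$-null as well (since the condition $|x_n|\wedge u\xrightarrow{w}0$ is the same for $x_n$ and $|x_n|$), so $\|T^2(|x_n|)\|\to 0$. Positivity of $S$ together with $S\le T^2$ yields
$$\|S(x_n)\|=\bigl\||S(x_n)|\bigr\|\le \|S(|x_n|)\|\le \|T^2(|x_n|)\|\longrightarrow 0,$$
which is precisely the $uaw$-Dunford-Pettis property for $S$.

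With this in hand, Proposition \ref{1002} applied to $S$ shows that $S^2$ sends order intervals to norm totally bounded sets, while Proposition \ref{5} applied to $S$ (or to $S^2$, which is again $uaw$-Dunford-Pettis by the ideal property) shows that $S^2$ is $M$-weakly compact.

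To finish, I would combine these two features. Given $\varepsilon>0$, Meyer-Nieberg's theorem \cite[Theorem 5.60]{AB} supplies $u\in E_+$ with $\|S^2((|y|-u)^+)\|<\varepsilon$ for every $y\in B_E$. Decomposing $y=y^+-y^-$ and each of $y^{\pm}$ as $y^{\pm}\wedge u+(y^{\pm}-u)^+$ expresses $S^2 y$ as an element of the totally bounded set $S^2[0,u]-S^2[0,u]$ (by the AM-compactness of $S^2$) plus an error of norm less than $2\varepsilon$. As $\varepsilon>0$ was arbitrary, $S^2(B_E)$ is totally bounded, so $S^2$ is compact. I expect the main obstacle to be the first step, since it is the only place where the precise form of the hypothesis---domination by the \emph{square} of a $uaw$-Dunford-Pettis operator, rather than by some arbitrary compact or weakly compact operator---is genuinely used to push the $uaw$-DP property down to $S$ and unlock Proposition \ref{1002}.
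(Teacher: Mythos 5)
Your proof is correct, but it takes a genuinely different and more self-contained route than the paper's. The paper never works with $S$ directly: it checks that $T$ is $o$-weakly compact (Proposition \ref{1001}) and $M$-weakly compact (Proposition \ref{5}) and that $T^2$ is AM-compact (Proposition \ref{1002}), and then delegates the whole domination argument to \cite[Page 338, Exercise 13]{AB}. You instead push the $uaw$-Dunford--Pettis property itself down to $S$ via $\|Sx_n\|=\||Sx_n|\|\le\|S(|x_n|)\|\le\|T^2(|x_n|)\|$ — this step is sound (note that $(|x_n|)$ is $uaw$-null exactly when $(x_n)$ is, and $T^2$ is $uaw$-Dunford--Pettis by the ideal property), and it is precisely the observation the paper records only later, in the remark that a positive operator dominated by a positive $uaw$-Dunford--Pettis operator is again $uaw$-Dunford--Pettis. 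This lets you apply Propositions \ref{5} and \ref{1002} to $S$ itself, so that $S^2$ is simultaneously $M$-weakly compact and AM-compact, and your concluding Meyer--Nieberg decomposition of $S^2(B_E)$ into $S^2[0,u]-S^2[0,u]$ plus a $2\varepsilon$-error is the standard, correct proof that such an operator is compact. Your route buys transparency and independence from the cited exercise — in particular you never need the $o$-weak compactness of $T$ that the paper extracts from Proposition \ref{1001} — at the cost of essentially re-proving the exercise's content; the paper's route buys brevity by outsourcing that work.
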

\begin{proof}
By Proposition \ref{5} and Proposition \ref{1001}, $T$ is $o$-weakly compact and $M$-weakly compact. Moreover, by Proposition \ref{1002}, $T^2$ maps order intervals into norm totally bounded sets. Now, the conclusion follows from \cite[Page 338, Exercise 13]{AB}.
\end{proof}
Observe that since the identity operator on $\ell_1$ is Dunford-Pettis, we can not expect compactness of any power of $T$; the following is surprising.
\begin{coro}\label{6000}
Suppose $E$ is a Banach lattice. Then, for every positive $uaw$-Dunford-Pettis operator $T$ on $E$, $T^4$ is a compact operator.
\end{coro}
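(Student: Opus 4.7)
The plan is to apply the immediately preceding theorem with the specific choice $S = T^2$. Since $T$ is assumed positive, the composition $T^2 = T \circ T$ is also positive (product of two positive operators), and $T^2$ is trivially dominated by itself in the operator order, so the hypothesis ``$0 \leq S \leq T^2$'' is satisfied by taking $S := T^2$. The theorem then yields that $S^2$ is compact, and $S^2 = (T^2)^2 = T^4$, which is the desired conclusion.

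The reason the statement feels surprising, as the authors emphasize, is that for plain Dunford--Pettis operators no power need be compact (e.g., the identity on $\ell_1$). What salvages the situation here is the strictly stronger $uaw$-Dunford--Pettis hypothesis: by Proposition \ref{5} and Proposition \ref{1001}, $T$ is simultaneously $M$-weakly compact and $o$-weakly compact, and by Proposition \ref{1002} the square $T^2$ sends order intervals into norm totally bounded sets. These three facts together feed into the domination-style compactness criterion \cite[Page 338, Exercise 13]{AB} used inside the preceding theorem.

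Accordingly, all the nontrivial analytic content is already packaged in the theorem, and the corollary is nothing more than the specialization $S = T^2$. There is essentially no obstacle: the only things to verify are positivity of $T^2$ and the trivial operator inequality $T^2 \leq T^2$, both of which are immediate. No further computation is required beyond invoking the theorem.
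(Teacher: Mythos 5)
Your proposal is correct and is exactly the argument the paper intends: the corollary is stated without proof precisely because it is the specialization $S = T^2$ of the preceding theorem, and your verification that $T^2$ is positive and trivially satisfies $0 \le T^2 \le T^2$ is all that is needed.
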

\begin{coro}
Suppose $E$ is an order continuous Banach lattice and $T$ is a positive $uaw$-Dunford-Pettis operator on $E$. Then, for every operator $S$, with $0\leq S\le T$, $S^2$ is compact. In particular, the square of a positive $uaw$-Dunford-Pettis operator is compact.
\end{coro}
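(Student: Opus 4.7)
The plan is to carry the $uaw$-Dunford-Pettis property from $T$ down to $S$ via the domination $0\le S\le T$, and then run the argument of the preceding theorem on $S$ itself; the order continuity of $E$ together with Meyer-Nieberg's characterisation lets us replace the coarser conclusion ``$S^{4}$ compact'' (which is what applying the preceding theorem directly to $S$ would give) by the sharper ``$S^{2}$ compact''.

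First I would verify that $S$ is itself a positive $uaw$-Dunford-Pettis operator. For any norm bounded $uaw$-null sequence $(x_{n})$ in $E$, the sequence $(|x_{n}|)$ is positive, bounded, and still $uaw$-null, directly from the definition of $uaw$-convergence. Since $T$ is $uaw$-Dunford-Pettis, $\|T(|x_{n}|)\|\to 0$, and the lattice bound $|Sx_{n}|\le S(|x_{n}|)\le T(|x_{n}|)$ then forces $\|Sx_{n}\|\to 0$, so that $S$ is $uaw$-Dunford-Pettis.

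With $S$ in hand as a positive $uaw$-Dunford-Pettis operator, Propositions \ref{5}, \ref{1001} and \ref{1002} tell us that $S$ is $M$-weakly compact, $o$-weakly compact, and that $S^{2}$ sends order intervals into norm totally bounded sets. Because $S^{2}(B_{E})\subseteq S^{2}(B_{E_{+}})-S^{2}(B_{E_{+}})$, it suffices to show that $S^{2}(B_{E_{+}})$ is norm totally bounded. Given $\varepsilon>0$ and a bounded positive sequence $(z_{n})\subset B_{E_{+}}$, Meyer-Nieberg's theorem \cite[Theorem 5.60]{AB} produces some $u\in E_{+}$ with $\|Sz_{n}-S(z_{n}\wedge u)\|<\varepsilon/(\|S\|+1)$ for every $n$; composing with $S$ yields $\|S^{2}z_{n}-S^{2}(z_{n}\wedge u)\|<\varepsilon$. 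Since $z_{n}\wedge u\in[0,u]$, the vector $S^{2}(z_{n}\wedge u)$ lies in the norm totally bounded set $S^{2}[0,u]$ supplied by Proposition \ref{1002}, so $(S^{2}(z_{n}\wedge u))$ admits a norm Cauchy subsequence. A standard diagonal argument as $\varepsilon\to 0$ then extracts a Cauchy subsequence of $(S^{2}z_{n})$ itself, so $S^{2}(B_{E_{+}})$ is totally bounded and $S^{2}$ is compact. The \emph{in particular} claim is the case $S=T$.

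The main subtlety I anticipate is isolating exactly where order continuity of $E$ is used: naively invoking the preceding theorem on $S$ only yields $S^{4}$ compact, and the step that recovers one power is the pairing of Meyer-Nieberg's uniform-in-$n$ approximation with the norm total boundedness of $S^{2}$ on the single order interval $[0,u]$. That coupling, together with the splitting $x=x^{+}-x^{-}$, is the crux, and the order continuity of $E$ is the ambient hypothesis that keeps the Meyer-Nieberg $u$ under control and makes the coupling effective.
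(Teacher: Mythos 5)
Your argument is a valid deduction from the results stated earlier in the paper, but it follows a genuinely different route from the paper's. The paper stays with the dominating operator $T$: it notes that $T$ is $o$-weakly compact (Proposition \ref{1001}), uses the order continuity of $E$ to pass from ``$T[0,x]$ relatively weakly compact'' to ``$T[0,x]$ relatively norm compact'', and then cites the domination exercise \cite[Page 338, Exercise 13]{AB} to conclude that every positive $S\le T$ has a compact square. You instead first transfer the $uaw$-Dunford--Pettis property from $T$ down to $S$ (correctly: $|x_n|$ is again $uaw$-null and $|Sx_n|\le S|x_n|\le T|x_n|$), and then prove compactness of $S^2$ by hand, splicing the Meyer--Nieberg estimate $\|Sz_n-S(z_n\wedge u)\|<\varepsilon'$ furnished by $M$-weak compactness (Proposition \ref{5}) with the total boundedness of $S^2[0,u]$ furnished by Proposition \ref{1002}. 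In effect you re-prove the relevant case of the Aliprantis--Burkinshaw exercise instead of citing it; each individual step is legitimate.

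The point you must confront is that your proof never uses the order continuity of $E$: your closing claim that it ``keeps the Meyer--Nieberg $u$ under control'' is not borne out by anything in the argument, since \cite[Theorem 5.60]{AB} needs no such hypothesis. Taken at face value, your argument shows that $S^2$ is compact over an \emph{arbitrary} Banach lattice, which would make the order-continuity hypothesis here and the fourth power in Corollary \ref{6000} superfluous. The entire weight therefore falls on Proposition \ref{1002}, whose proof the paper omits; the obvious attempt to prove it (extract a weakly convergent subsequence from $T[0,x]$ and apply $T$ again) stalls because an order bounded weakly null sequence need not be $uaw$-null (consider the Rademacher functions). You should either supply a proof of Proposition \ref{1002} in the generality in which you invoke it, or identify the genuine entry point of order continuity; as written, your diagnosis of where that hypothesis acts is incorrect.
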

\begin{proof}
By Proposition \ref{1001}, $T$ is $o$-weakly compact. This means that the order bounded set $T[0,x]$ is relatively weakly compact. By \cite[Theorem 4.17]{AB}, it is relatively compact. Now, from \cite[Page 338, Exercise 13]{AB}, we conclude that every positive operator $S$ dominated by $T$, has a compact square.
\end{proof}

\begin{coro}
Suppose $E$ is a Banach lattice. Then the identity operator on $E$ is $uaw$-Dunford-Pettis if and only if $E$ is finite dimensional.
\end{coro}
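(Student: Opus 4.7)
The plan is to prove the two directions separately, with the hard direction $(\Rightarrow)$ being essentially a one-line application of Corollary \ref{6000}.

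For $(\Rightarrow)$, assume that the identity operator $I\colon E\to E$ is $uaw$-Dunford-Pettis. Since $I$ is positive, Corollary \ref{6000} applies directly and gives that $I^4$ is compact. But $I^4=I$, so the identity on $E$ is a compact operator. By the classical Riesz theorem, the identity on a Banach space is compact if and only if the space is finite dimensional, so $E$ must be finite dimensional. (If one prefers to avoid iterating to the fourth power, one could first invoke Proposition \ref{5} to see that $I$ is $M$-weakly compact and hence weakly compact, which forces $E$ to be reflexive, in particular order continuous, and then apply the previous corollary to conclude $I^2=I$ is compact; the conclusion is the same.)

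For $(\Leftarrow)$, suppose that $E$ is finite dimensional. Let $(x_n)$ be a norm bounded sequence in $E$ with $x_n\xrightarrow{uaw}0$. The dual of a finite dimensional Banach lattice is trivially order continuous, so by \cite[Theorem 7]{Z} we have $x_n\xrightarrow{w}0$. In finite dimension the weak and norm topologies coincide, hence $\|x_n\|=\|I(x_n)\|\to 0$, which shows that $I$ is $uaw$-Dunford-Pettis.

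The only step that requires any thought is the forward direction, and even that is cleanly handled by Corollary \ref{6000}; the main ``obstacle'' is simply recognizing that the whole corollary sequence developed earlier in the section exists precisely to make this classification immediate.
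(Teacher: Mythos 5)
Your proposal is correct and follows essentially the same route as the paper: the forward direction is the one-line application of Corollary \ref{6000} (noting $I^4=I$), and the backward direction uses reflexivity/order continuity of the dual in finite dimension to pass from $uaw$-null to weakly null to norm null. The paper's own proof is word-for-word this argument, so no further comparison is needed.
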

\begin{proof}
Suppose the identity operator on $E$ is $uaw$-Dunford-Pettis. By Corollary \ref{6000}, it is compact. This yields that $E$ is finite dimensional. suppose $E$ is a finite dimensional Banach lattice. So, it is atomic and reflexive. Therefore, every $uaw$-null sequence is weakly null so that norm null. This means that the identity operator on $E$ is $uaw$-Dunford-Pettis.
\end{proof}
Furthermore, considering Theorem \ref{*}, we get the following important result.
\begin{coro}
Suppose $E$ is an order continuous Banach lattice. Then, the square of every positive $M$-weakly compact operator on $E$ is compact.
\end{coro}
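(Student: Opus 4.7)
The plan is to chain together the two results just established, invoking Theorem \ref{*} and the corollary immediately preceding the statement. Since $E$ is order continuous, Theorem \ref{*} (applied with $F = E$) turns the hypothesis "$T$ is positive $M$-weakly compact" into "$T$ is a positive $uaw$-Dunford-Pettis operator on $E$''. The order continuity of $E$ is exactly the setting of the previous corollary, which states that the square of any positive $uaw$-Dunford-Pettis operator on an order continuous Banach lattice is compact. Composing these two implications gives the conclusion immediately.

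Concretely, I would first let $T\colon E\to E$ be positive and $M$-weakly compact. Then I would cite Theorem \ref{*} with $E=F$ to deduce that $T$ is $uaw$-Dunford-Pettis; note that the positivity of $T$ is exactly the hypothesis required by that theorem. Finally, I would quote the preceding corollary (with $S=T$ in its ``in particular'' clause) to conclude that $T^2$ is compact.

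There is essentially no obstacle here, since both ingredients have already been proved: Theorem \ref{*} uses the Meyer-Nieberg localization \cite[Theorem 5.60]{AB} together with order continuity to show that $uaw$-null sequences are sent to norm-null ones by positive $M$-weakly compact operators, and the corollary bundles Propositions \ref{5}, \ref{1001}, \ref{1002} together with \cite[Page 338, Exercise 13]{AB} to extract compactness of the square. So the only work is to spell out the two-step reduction and observe that the hypotheses line up: positivity and $M$-weak compactness pass through Theorem \ref{*}, and order continuity of $E$ is precisely what the corollary needs to upgrade $o$-weak compactness of $T$ to norm compactness of the order intervals $T[0,x]$, which is the engine behind $S^2$ being compact.
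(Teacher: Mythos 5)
Your proposal is correct and matches the paper's intended argument exactly: the paper derives this corollary by combining Theorem \ref{*} (with $F=E$, so positivity and order continuity give that $T$ is $uaw$-Dunford-Pettis) with the immediately preceding corollary on squares of positive $uaw$-Dunford-Pettis operators on order continuous Banach lattices. No gaps.
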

Similar to the case of usual compact operators and Dunford-Pettis ones, it might seem at the first glance that every $uaw$-compact operator is $uaw$-Dunford-Pettis; the following example is surprising.

\begin{exam} The inclusion $\ell_2\hookrightarrow \ell_{\infty}$ is weakly compact by \cite[Theorem 5.24]{AB}. Previously, we showed that this operator is sequentially $uaw$-compact. However it is not $uaw$-Dunford-Pettis. For, the standard basis $(e_n)$ is $uaw$-null but it is not norm convergent to zero.
\end{exam}
Also, the other implication may fail, as well.
\begin{exam} Consider the inclusion map $J\colon L^{\infty}[0,1]\rightarrow L^1[0,1]$. It follows from \cite[Page 313, Exercise 7]{AB} that $J$ is weakly compact. In fact, $J$ is $uaw$-Dunford-Pettis. To see this, suppose $(f_n)$ is a norm bounded sequence which converges to zero in the $uaw$-topology, by \cite[Theorem 7]{Z}, it follows that it is weakly convergent. Since $L^1[0,1]\subseteq (L^{\infty}[0,1])'$ and the constant function one
 lies in $L^1[0,1]$, we conclude that $\|f_n\|_1\rightarrow 0$, as claimed. However $J$ is not $uaw$-compact, since the norm bounded sequence $(r_n)$ of the Rademacher's functions does not have any $uaw$-convergent subsequence.
\end{exam}

For the $uaw$-convergence, we have $x_\alpha \xrightarrow{uaw} x$ in Banach lattice $E$ if and only if  $|x_\alpha - x| \xrightarrow{uaw} 0$; see \cite[Lemma 1]{Z}. It allows one to reduce $uaw$-convergence to the $uaw$-convergence of positive nets to zero.

\begin{prop} Let $T\colon E\rightarrow F$ be a positive $uaw$-Dunford-Pettis operator between Banach lattices with $F$ Dedekind complete. Then the Kantorovich-like extension $S\colon E\rightarrow F$ defined via
\begin{center}
$S(y)=\sup \left\{ T(y\wedge y_n)\colon  (y_n)\subseteq E_{+}, y_n\xrightarrow{uaw} 0\right\}$
\end{center}
for $y\in E_{+}$ is again $uaw$-Dunford-Pettis.
\end{prop}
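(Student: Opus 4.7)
The plan is to show that $S$ is a well-defined additive positively homogeneous map on $E_+$, dominated by $T$, extend it linearly via Kantorovich's theorem to a positive operator $S\colon E\to F$ with $0\le S\le T$, and then derive the $uaw$-Dunford-Pettis property from this domination together with the hypothesis on $T$.

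First I would verify that the supremum defining $S(y)$ exists in $F$: for any $(y_n)\subseteq E_+$ with $y_n\xrightarrow{uaw} 0$ one has $0\le y\wedge y_n\le y$, so positivity of $T$ gives $0\le T(y\wedge y_n)\le T(y)$; the Dedekind completeness of $F$ then yields $S(y)\in F_+$ with $0\le S(y)\le T(y)$.

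The main technical step is additivity $S(y_1+y_2)=S(y_1)+S(y_2)$ on $E_+$. For the inequality $\le$, I would use the Riesz-space inequality $(y_1+y_2)\wedge w\le y_1\wedge w+y_2\wedge w$ for $w\in E_+$: applying positive $T$ and the definition of $S$ gives $T((y_1+y_2)\wedge w_n)\le T(y_1\wedge w_n)+T(y_2\wedge w_n)\le S(y_1)+S(y_2)$, and then take the supremum over $uaw$-null $(w_n)\subseteq E_+$. For $\ge$, given two $uaw$-null sequences $(y_n),(z_n)\subseteq E_+$, the sequence $(y_n+z_n)$ is again positive and $uaw$-null, and one verifies $y_1\wedge y_n+y_2\wedge z_n\le (y_1+y_2)\wedge(y_n+z_n)$; linearity and positivity of $T$ then yield $T(y_1\wedge y_n)+T(y_2\wedge z_n)\le S(y_1+y_2)$, and taking the supremum jointly in $(y_n)$ and $(z_n)$ gives $S(y_1)+S(y_2)\le S(y_1+y_2)$. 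Positive homogeneity $S(\lambda y)=\lambda S(y)$ for $\lambda\ge 0$ is immediate by substituting $y_n\mapsto \lambda z_n$ in the defining supremum.

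With additivity and positive homogeneity in hand, Kantorovich's extension theorem produces a unique positive linear operator $S\colon E\to F$ extending the restriction to $E_+$, still satisfying $0\le S\le T$. To conclude $S\in B_{UDP}(E,F)$, take any norm bounded sequence $(x_n)\subseteq E$ with $x_n\xrightarrow{uaw} 0$. The identity $x_\alpha\xrightarrow{uaw} 0\iff |x_\alpha|\xrightarrow{uaw} 0$ recalled just before the proposition shows that $(|x_n|)$ is bounded and $uaw$-null, so $\|T(|x_n|)\|\to 0$ by hypothesis on $T$; the standard domination $|S(x_n)|\le S(|x_n|)\le T(|x_n|)$ then forces $\|S(x_n)\|\to 0$, as required.

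The principal obstacle is the additivity step, and in particular the $\ge$ direction: one must cleverly combine the two independent $uaw$-null data sequences into a single test sequence, which is exactly what $w_n=y_n+z_n$ together with the lattice inequality $y_1\wedge y_n+y_2\wedge z_n\le(y_1+y_2)\wedge(y_n+z_n)$ accomplishes. Everything else is either an application of Kantorovich's theorem or a routine domination argument.
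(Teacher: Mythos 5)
Your proposal is correct and follows essentially the same route as the paper: subadditivity via $(y_1+y_2)\wedge w\le y_1\wedge w+y_2\wedge w$, superadditivity by merging two $uaw$-null sequences into their sum, the Kantorovich extension theorem, and finally the domination $S(|x_n|)\le T(|x_n|)$ to transfer the $uaw$-Dunford-Pettis property. Your write-up is in fact slightly more careful than the paper's (explicit well-definedness of the supremum and the reduction $x_n\xrightarrow{uaw}0\iff|x_n|\xrightarrow{uaw}0$), but the ideas coincide.
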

\begin{proof}
Suppose $y,z\in E_{+}$. Then
\begin{center}
$S(y+z)=\sup_{n} \{T((y+z)\wedge \gamma_n)\}\le \sup_{n}\{T(y\wedge \gamma_n)\}+\sup_{n}\{T(z\wedge \gamma_n)\}\le S(y)+S(z),$
\end{center}
in which, $(\gamma_n)$ is a positive sequence that is $uaw$-null. On the other hand,
\[T(y\wedge \alpha_n)+T(z\wedge \beta_n)=T(y\wedge \alpha_n+z\wedge \beta_n)\leq T((y+z)\wedge (\alpha_n+\beta_n))\leq S(y+z),\]
provided that two positive sequences $(\alpha_n),(\beta_n)$ are $uaw$-null so that $S(y)+S(z)\leq S(y+z)$. Therefore, by the Kantorovich extension Theorem \cite[Theorem 1.10]{AB}, $S$ extends to a positive operator. Denote by $S$ the extended operator $S\colon E\rightarrow F.$

We show that $S$ is also $uaw$-Dunford-Pettis. Suppose the norm bounded sequence $(y_n)\subseteq E_{+}$ is $uaw$-null. Therefore, we have
\[\|S(y_n)\|=\|\sup_{m}T(y_n\wedge \alpha_m)\|\leq \|T(y_n)\|\rightarrow 0,\]
in which $(\alpha_m)$ is a positive sequence in $E$ which is convergent to zero in the $uaw$-topology.
\end{proof}

In the next example, we show that adjoint of a non $uaw$-Dunford-Pettis operator can be $uaw$-Dunford-Pettis.
\begin{exam}
Consider the operator $T\colon \ell_1\rightarrow L^2[0,1]$ defined by $T((x_n))=(\sum_{i=1}^{\infty}x_n)\chi_{[0,1]}$ for all $(x_n)\in\ell_2$ where $\chi_{[0,1]}$ denotes the characteristic function of $[0,1].$ The operator $T$ is compact but it is not $uaw$-Dunford-Pettis. Its adjoint $T'\colon L^2[0,1]\rightarrow \ell_{\infty}$ is compact, and hence, it is Dunford-Pettis. By Proposition \ref{2}, we conclude that it is $uaw$-Dunford-Pettis.
\end{exam}

\begin{rem}
One may verify that every positive operator which is dominated by a positive $uaw$-Dunford-Pettis operator is again $uaw$-Dunford-Pettis. Therefore, if $T$ is an operator whose modulus is $uaw$-Dunford-Pettis, it can be easily seen that $T$ is also $uaw$-Dunford-Pettis.
Furthermore, a remarkable Theorem by Kalton-Saab ( \cite[Theorem 5.90]{AB}) asserts that if the range space is order continuous, then we can deduce the former statement in the case of Dunford-Pettis operators. So, this point can be considered as an advantage for $uaw$-Dunford-Pettis operators.
\end{rem}
In this step, we investigate closedness properties of $B_{UDP}(E)$.
\begin{prop}\label{1000}
$B_{UDP}(E)$ is closed subalgebra of $B(E)$.
\end{prop}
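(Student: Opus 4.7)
The subalgebra part is already recorded in the earlier corollary (via parts (iii) and (iv) of the composition proposition, which together with linearity show that sums and products of $uaw$-Dunford-Pettis operators are $uaw$-Dunford-Pettis). So the only new content is norm closedness of $B_{UDP}(E)$ inside $B(E)$, which I would handle by a standard $\varepsilon/2$ argument.

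The plan is as follows. Take a sequence $(T_k) \subseteq B_{UDP}(E)$ with $T_k \to T$ in the operator norm of $B(E)$, and let $(x_n)$ be an arbitrary norm bounded $uaw$-null sequence in $E$, say with $\|x_n\| \leq M$ for all $n$. I need to show $\|T(x_n)\| \to 0$. Fix $\varepsilon > 0$. Using norm convergence of $(T_k)$, pick $k_0$ with $\|T - T_{k_0}\| < \varepsilon/(2M)$, so that $\|T(x_n) - T_{k_0}(x_n)\| < \varepsilon/2$ uniformly in $n$. Since $T_{k_0}$ is $uaw$-Dunford-Pettis and $(x_n)$ is bounded and $uaw$-null, $\|T_{k_0}(x_n)\| \to 0$, so we can pick $n_0$ with $\|T_{k_0}(x_n)\| < \varepsilon/2$ for $n \geq n_0$. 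The triangle inequality then gives $\|T(x_n)\| < \varepsilon$ for $n \geq n_0$, establishing $T \in B_{UDP}(E)$.

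There is essentially no obstacle here: the argument is the same uniform-approximation trick that shows compact, weakly compact, and Dunford-Pettis operators form norm closed subspaces. The only thing to be slightly careful about is that the definition of $uaw$-Dunford-Pettis only tests against bounded sequences, so the uniform bound $M$ on $\|x_n\|$ is available and makes the approximation $\|T - T_{k_0}\|\cdot M$ valid. Combined with the earlier corollary giving the subalgebra structure, this yields the proposition.
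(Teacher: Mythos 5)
Your argument is correct and is essentially the same $\varepsilon/2$ approximation argument the paper uses (the paper likewise delegates the subalgebra part to the earlier corollary and only proves norm closedness). In fact you are slightly more careful than the paper's own write-up, which bounds $\|T(x_n)-T_m(x_n)\|$ by $\|T-T_m\|$ without inserting the factor $\|x_n\|\le M$; your version with $\|T-T_{k_0}\|<\varepsilon/(2M)$ is the cleaner statement of the same step.
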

\begin{proof}
Suppose $(T_m)$ is sequence of $uaw$-Dunford-Pettis operators which is convergent to the operator $T$. We show that $T$ is also $uaw$-Dunford-Pettis. Assume that $(x_n)$ is a bounded $uaw$-null sequence in $E$. Given any $\varepsilon>0$. There is an $m_0$ such that $\|T_m-T\|< \frac{\varepsilon}{2}$ for each $m>m_0$. Fix an $m>m_0$. For sufficiently large $n$, we have $\|T_m(x_n)\|< \frac{\varepsilon}{2}$. Therefore,
\[\|T(x_n)\|<\|T_m-T\|+\|T_m(x_n)\|< \varepsilon.\]
\end{proof}
The class of all $uaw$-Dunford-Pettis operators is not order closed. Consider the following.
\begin{exam}
Put $E=c_0$. Suppose $P_n$ is the projection on the $n$-th first components. Each $P_n$ is finite rank operator so that Dunford-Pettis. By Proposition \ref{2}, it is $uaw$-Dunford-Pettis. Also, $P_n\uparrow {I}$, where $I$ denotes the identity operator on $E$. But $I$ is not $uaw$-Dunford-Pettis as the standard basis $(e_i)$ is $uaw$-null but not norm convergent to zero.
\end{exam}
\begin{rem}
It is a natural and remarkable question to ask whether $B_{UDP}(E)$ can have a lattice structure or not. This can be reduced as follows. When the modulus of a $uaw$-Dunford-Pettis operator exists and is again $uaw$-Dunford-Pettis?
In general, the answer to this question is not affirmative. Consider \cite[Example 5.6]{AB} which is due to Krengel. Observe that the space $E$ mentioned there, is a Dedekind complete  order continuous Banach lattice whose dual is again order continuous. The operator $T$ is compact so that Dunford-Pettis. By Proposition \ref{2}, it is $uaw$-Dunford-Pettis. The sequence $(\hat{x_n})$ is disjoint so that by \cite[Lemma 2]{Z}, it is $uaw$-null. But as we see in the example $|T|(\hat{x_n})$ can not be norm null.

The handy required tool is being disjoint preserving. Recall that an operator $T$ between vector lattices $E$ and $F$ is called disjoint preserving if $x\bot y$ implies that $Tx\bot Ty$.
\end{rem}
\begin{thm}\label{2000}
Suppose $E$ is a Banach lattice. Then every order bounded disjoint preserving operator which is $uaw$-Dunford-Pettis possesses a modulus which is $uaw$-Dunford-Pettis.
\end{thm}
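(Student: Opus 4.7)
The plan is to invoke the classical result of Meyer that an order bounded disjoint preserving operator $T\colon E\to F$ admits a modulus $|T|$ which is itself a lattice homomorphism, and satisfies the fundamental identity
\[|T|(|x|)=\bigl|T(|x|)\bigr|=|Tx|\qquad(x\in E).\]
This identity is the whole engine of the argument: it transfers norm estimates on $|T|$ back to norm estimates on $T$ itself (evaluated at the modulus $|x_n|$), and the latter we can control through the hypothesis that $T$ is $uaw$-Dunford-Pettis.

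Thus I would begin the proof by citing Meyer's theorem to assert the existence of $|T|$, recording the identity $\bigl|\,|T|(y)\,\bigr|=|T|(|y|)=|T(|y|)|$ for every $y\in E$. Then, to prove that $|T|$ is $uaw$-Dunford-Pettis, I would take an arbitrary bounded sequence $(x_n)\subseteq E$ with $x_n\xrightarrow{uaw}0$ and reduce the estimate $\||T|(x_n)\|\to 0$ to
\[\bigl\|\,|T|(x_n)\,\bigr\|=\bigl\|\,|T|(|x_n|)\,\bigr\|=\bigl\|T(|x_n|)\bigr\|,\]
using the Meyer identity in the first step (via $||T|(x_n)|=|T|(x_n^+)+|T|(x_n^-)=|T|(|x_n|)$) and again in the second step.

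The remaining task is to show that $\|T(|x_n|)\|\to 0$. For this I would invoke \cite[Lemma 1]{Z}, which gives $x_n\xrightarrow{uaw}0$ if and only if $|x_n|\xrightarrow{uaw}0$; since $\||x_n|\|=\|x_n\|$ is still bounded, the sequence $(|x_n|)$ is a bounded $uaw$-null sequence, and the assumption that $T$ itself is $uaw$-Dunford-Pettis yields $\|T(|x_n|)\|\to 0$. Combined with the previous display, this gives $\||T|(x_n)\|\to 0$, proving that $|T|$ is $uaw$-Dunford-Pettis.

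There is no substantive obstacle once the Meyer modulus machinery is in hand; the one point that deserves care is making sure that we are allowed to replace $x_n$ by $|x_n|$ in the $uaw$-null hypothesis, which is precisely the content of \cite[Lemma 1]{Z}. The disjoint preserving assumption is used only to guarantee that $|T|$ exists and is a lattice homomorphism (so that the identity $|T|(|x|)=|T(|x|)|$ holds); order boundedness ensures that $|T|$ is an honest bounded operator on $E$. Beyond these two ingredients, the argument is a short reduction using $uaw$-convergence of moduli.
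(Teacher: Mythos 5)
Your proposal is correct and follows essentially the same route as the paper: both rest on Meyer's theorem (\cite[Theorem 2.40]{AB}) for the existence of the modulus and the identity $|T|(|x|)=|T(|x|)|$, reduce to the positive sequence $(|x_n|)$ via \cite[Lemma 1]{Z}, and then apply the $uaw$-Dunford--Pettis hypothesis on $T$. The only difference is that you spell out the reduction to moduli explicitly, whereas the paper starts directly from a positive $uaw$-null sequence.
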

\begin{proof}
By \cite[Theorem 2.40]{AB}, the modulus of $T$ exists and satisfies at the identity $|T|(x)=|T(x)|$ for each positive element $x\in E$. Suppose $(x_n)$ is a bounded positive net which is $uaw$-null. By the hypothesis, $\|T(x_n)\|\rightarrow 0$. So, $|T|(x_n)$ is also norm null.
\end{proof}
\begin{rem}
Observe that there is no inclusion relation between $uaw$-Dunford-Pettis operators and disjoint preserving ones. The identity operator on $\ell_1$ is certainly disjoint preserving but it is not $uaw$-Dunford-Pettis. Furthermore, consider the operator $T$ on $C[0,1]$ defined via $T(f)=(f(0)+f(1))\bf{1}$. One may verify that $T$ is a compact operator so that Dunford-Pettis. By Proposition \ref{2}, it is $uaw$-Dunford-Pettis but it is not disjoint preserving, as mentioned in \cite[Page 117]{AB}.
\end{rem}
Consider this point that disjoint preserving operators need not be a vector space (see \cite[Page 117]{AB} for more details). Suppose $E$ is an Archimedean vector lattice. Recall that an operator $T$ on $E$ is said to be orthomorphism if it is order bounded and band preserving, simultaneously; equivalently, $T$ is an orthomorphism if it is an order bounded operator in which $x\bot y$ results in $T(x)\bot y$. Every orthomorphism is disjoint preserving. Certainly, $Orth(E)=\{T\in L_b(E), x\bot y \Rightarrow Tx\bot y\}$ is a vector subspace of $L_b(E)$ so that an ordered vector space itself, in which $L_b(E)$ is the space of all order bounded operators on $E$. Moreover, by a notable result due to Bigard, Keimel, Conrad, and Diem (\cite[Theorem 2.43]{AB}), $Orth(E)=\{T\in L_b(E), x\bot y \Rightarrow Tx\bot y\}$, is an Archimedean vector lattice where the lattice structures are provided pointwise. Now, we have the following.
\begin{coro}
Suppose $E$ is a Banach lattice and $Orth_{UDP}(E)$ is the space of all $uaw$-Dunford-Pettis orthomomrphisms on $E$.
Then, $Orth_{UDP}(E)$ is an $AM$-space with respect to the regular norm.
\end{coro}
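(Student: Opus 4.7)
The plan is to realize $Orth_{UDP}(E)$ as a norm-closed vector sublattice of the classical $AM$-space $(Orth(E),\|\cdot\|_r)$, and then to invoke the standard principle that a closed vector sublattice of an $AM$-space is itself an $AM$-space. The input I would use from outside the paper is the well-known fact that on any Banach lattice $E$ every orthomorphism $T$ satisfies $|T|\le \|T\|\, I_E$, so that the identity operator $I_E$ is an order unit for $Orth(E)$, the operator norm and the regular norm coincide on $Orth(E)$, and this common norm is precisely the order-unit norm. This makes $(Orth(E),\|\cdot\|_r)$ an $AM$-space with unit.

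The first step is to check that $Orth_{UDP}(E)$ is a linear sublattice of $Orth(E)$. Linearity is immediate from the earlier corollary asserting that $B_{UDP}(E)$ is a subalgebra of $B(E)$. For lattice closure, fix $T\in Orth_{UDP}(E)$. Every orthomorphism is automatically order bounded and disjoint preserving, so Theorem \ref{2000} supplies a modulus $|T|$ that is again $uaw$-Dunford-Pettis, while the Bigard--Keimel--Conrad--Diem theorem cited just above the statement guarantees $|T|\in Orth(E)$. Hence $|T|\in Orth_{UDP}(E)$, and $Orth_{UDP}(E)$ is a vector sublattice of $Orth(E)$.

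The second step is norm-closedness. Proposition \ref{1000} (the closedness statement, namely that $B_{UDP}(E)$ is a closed subalgebra of $B(E)$ in the operator norm) combined with the fact that the regular norm dominates the operator norm on $Orth(E)$ implies that any regular-norm Cauchy sequence in $Orth_{UDP}(E)$ is also operator-norm Cauchy; using regular-norm completeness of $Orth(E)$ the two limits coincide and lie in $B_{UDP}(E)\cap Orth(E)=Orth_{UDP}(E)$. Since a closed vector sublattice of an $AM$-space is itself an $AM$-space, this gives the claim.

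The main obstacle I anticipate is the careful verification that $(Orth(E),\|\cdot\|_r)$ is an $AM$-space with unit $I_E$ for a general Banach lattice $E$, in particular the coincidence of the operator norm and the regular norm on $Orth(E)$; once this structural fact and the modulus property furnished by Theorem \ref{2000} are in hand, the rest of the argument is routine bookkeeping.
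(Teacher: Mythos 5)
Your proposal is correct and follows essentially the same route as the paper: both rest on Wickstead's theorem that $Orth(E)$ is an $AM$-space with unit in which the operator and regular norms coincide, the modulus stability from Theorem \ref{2000}, and the norm-closedness of $B_{UDP}(E)$ from Proposition \ref{1000}. You merely make explicit the sublattice verification that the paper leaves implicit in the surrounding discussion.
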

\begin{proof}
Observe that by \cite[Theorem 4.77]{AB} (due to Wickstead), $Orth(E)$ is an $AM$-space with unit.
Note that by using \cite[Theorem 2.40]{AB}, we conclude that for every $T\in Orth(E)$, $\|T\|=\|T\|_r$. This point compatible with Proposition \ref{1000} convince us that $Orth_{UDP}(E)$ is closed so that an $AM$-space in its own right.
\end{proof}


\begin{thebibliography}{1}
\bibitem{AA} Y. Abramovich and C.D. Aliprantis, An invitation to Operator theory,
Vol. 50. Providence, RI: American Mathematical Society, 2002.
\bibitem{AB} C.D. Aliprantis and O. Burkinshaw, Positive operators,
Springer, 2006.
\bibitem{BW} E. Bayram and A. W. Wickstead, {\em $L$-weakly and $M$-weakly compact operators and the center,} Arch. Math., {\bf 109}, (2017), 355-–363.
\bibitem{DM} R. DeMarr, {\em Partially ordered linear spaces and locally convex linear topological spaces}, Illinois J. Math., {\bf 8}, (1964), pp. 601-606.
\bibitem{DOT} Y. Deng, M O'Brien, and V. G. Troitsky, {\em Unbounded norm convergence in Banach lattices,} Positivity., {\bf 21}(3) (2017), pp. 963--974.
\bibitem{G} N. Gao, {\em Unbounded order convergence in dual spaces,} J. Math. Anal. Appl., {\bf 419}(2014), pp 347–-354.
\bibitem{GTX} N. Gao, V. G. Troitsky, and F. Xanthos, {\em Uo-convergence and its applications to Cesàro means in Banach lattices,}  Israel J. Math., {\bf 220} (2017), pp. 649--689.
\bibitem{GX} N. Gao and F. Xanthos, {\em Unbounded order convergence and application
to martingales without probability,} J. Math. Anal. Appl., {\bf 415} (2014), pp
931--947.
\bibitem{KMT} M. Kandi\'{c}, M.A.A. Marabeh, and  V. G. Troitsky, {\em Unbounded norm topology in Banach lattices,} J. Math. Anal. Appl.,  {\bf 451}(1)(2017), pp. 259--279.
\bibitem{M} P. Meyer-Nieberg, {\em Banach lattices}, Springer-Verlag, Berlin, 1991.
\bibitem{N} H. Nakano, {\em Ergodic theorems in semi-ordered linear spaces}, Ann. of Math. {\bf 49 (2)}, (1948), pp. 538–-556.
\bibitem{Ni} P. Nieberg, {\em Banach lattices,} Springer-Verlag, Berlin, 1991.
\bibitem{S} H.H. Schaefer, {\em Banach lattices and positive operators}, Springer-Verlag, Berlin, 1974.
\bibitem{T} V.G. Troitsky, {\em Measures of non-compactness of operators on Banach lattices}, Positivity, {\bf 8(2)}, (2004), pp. 165–178.
\bibitem{Z} O. Zabeti, {\em Unbounded Absolute Weak Convergence in Banach Lattices}, Positivity, {\bf 22(1)} (2018), pp. 501--505.

\end{thebibliography}
\end{document}